\newtheorem{prop}{\textbf{\ \ \quad Proposition}}[section]
\newcommand{\lam}{\lambda}
\renewcommand{\phi}{\varphi}
\newcommand{\e}{\epsilon}
\renewcommand{\a}{\alpha}
\newcommand{\mL}{\mathscr{L}}
\newcommand{\bu}{\bar{u}}
\newcommand{\R}{{\mathbb R}}
\newcommand{\de}{{\rm d}}
\newcommand{\dy}{{\rm d}y}
\newcommand{\eps}{\varepsilon}
\newcommand{\bear}{\begin{eqnarray}}
\newcommand{\enar}{\end{eqnarray}}
\newcommand{\bess}{\begin{eqnarray*}}
\newcommand{\eess}{\end{eqnarray*}}
\newcommand{\bearn}{\begin{eqnarray*}}
\newcommand{\enarn}{\end{eqnarray*}}
\newcommand{\befig}{\begin{figure}}
\newcommand{\eefig}{\end{figure}}
\title{Numerical algorithms for mean exit time and escape probability of stochastic systems with asymmetric L\'evy motion
%\footnote{This work was partly supported by the NSFC   Grant 11202160.}
}
\author{Xiao Wang$^{1}$ , Jinqiao Duan$^2$, and Xiaofan Li$^2$\footnote{Corresponding author.  }, and Renming Song$^{3}$  \\
\\
\ \\
 {\small \it $^1$School of Mathematics and Statistics, Henan University}\\
  {\small \it Kaifeng 475001, China}\\
  {\small \tt email xiaoheda06@163.com}\\
   {\small \it $^2$ Department of Applied Mathematics, Illinois Institute of Technology}\\
  {\small \it Chicago, IL 60616, USA }\\
   {\small \tt email duan@iit.edu(J. Duan), lix@iit.edu(X. Li)}\\
  {\small \it $^3$ Department of Mathematics, University of Illinois}\\
  {\small \it  Urbana, IL 61801, USA}\\
    {\small \tt  email rsong@math.uiuc.edu }
}
\begin{document}
\maketitle

\begin{abstract}
For non-Gaussian stochastic dynamical systems, mean exit time
and escape probability are important deterministic quantities,
which can be obtained from integro-differential (nonlocal)
equations.
We develop an efficient and convergent numerical method
for the mean first exit time
and escape probability for stochastic systems with an asymmetric L\'evy motion,
and analyze the properties of the solutions of the nonlocal equations.
We also investigate the effects of different system factors on
the mean exit time and   escape probability,
including the skewness parameter, the size
of the domain, the drift term and the intensity of Gaussian and
non-Gaussian noises. We find that the behavior of the mean exit time and
the escape probability has dramatic difference at the boundary of the domain
when the index of stability crosses the critical value of one.
\medskip
\emph{Key words:} Stochastic dynamical systems \and
Asymmetric L\'evy motion \and Integro-differential equation \and
First exit time \and Escape probability
\end{abstract}

\baselineskip=15pt

\section{Introduction}

Non-Gaussian stochastic dynamical systems are found in many applications such as economics,
telecommunications and physics \cite{Middleton, Klup04, first}.
As a special non-Gaussian stochastic process, $\alpha$-stable L\'evy process
(or often called $\alpha$-stable L\'evy motion) attracts
more and more attentions of mathematicians due to the properties
which the Gaussian process does not have. For example,
the tail of a Gaussian random variable decays
exponentially which does not fit well for modeling processes with
high variability or some extreme events,
such as earthquakes or stock market crashes.
However, the stable L\'evy motion has a `heavy tail' that decays polynomially
and could be useful for these applications.
For example, financial asset returns could present heavier tails relative to
the normal distribution, and asymmetric $\a$-stable distributions are
proper alternatives for modeling  them \cite{temper1}.
 Others considered the applications
in  financial risks, physics, and biology \cite{Asy_financial_ratios, finite, Mengli}.

Recently, many researchers begin to pay attention to the stochastic
dynamical systems   with asymmetric  stable L\'evy motion
due to the demand from applications \cite{completely_asy1, Xu2013, finite}.
For instance, Lambert \cite{Lambert} considers the first passage
time and leapovers with an asymmetric  stable L\'evy motion.

In this present work, we consider the following scalar stochastic differential equation (SDE)
\begin{align} \label{SDE}
    {\rm d} X_t = f(X_t){\rm d}t  +  {\rm d} L_t,
\end{align}
where the initial condition is $ X_0 = x$, $ f $ is a drift term (vector field), and $L_t$ is a L\'evy process with the generating triplet $(0,d,\e\nu_{\a, \beta})$ (when $d$ is taken zero, it is just an asymmetric stable L\'evy motion).
Here, $\nu_{\a, \beta}$ is an asymmetric L\'evy jump measure on
$\mathbb{R}\setminus\{0\}$, to be specified in the next section.
The well-posedness of SDE driven by L\'evy motion is discussed recently.
The existence and uniqueness of solutions under the standard Lipschitz and growth conditions driven by Brownian
 motion and independent Poisson random measure were given, for example, Applebaum \cite{Apple} .
  L\"{u} {\it et al.}\cite{Lu14}
 obtained a unique solution for stochastic quasi-linear heat equation driven by anisotropic fractional L\'evy
 noises under Lipschtz and linear conditions. Chen {\it et al.}\cite{Chen15} showed
 the SDE with a large class of L\'evy process had a unique strong solution  for
 H\"{o}lder continuous drift $f$.  Priola {\it et al.}\cite{Priola12} showed the pathwise
 uniqueness for SDE driven by nondegenerate symmetric $\a$-stable L\'evy process.
We focus on the macroscopic behaviors, particularly the mean exit
time and escape probability,   for the SDE~\eqref{SDE}
with an asymmetric  stable L\'evy motion.

There are numerous works discussing the symmetric $\a$-stable L\'evy motion
and the corresponding infinitesimal generator, which is
  a nonlocal operator  and is also called the fractional Laplacian operator
$(-\triangle)^{\frac{\a}{2}}$. It is equivalent to
fractional derivative  as follows,
\[
    (-\triangle)^{\frac{\a}{2}}u(x) = \frac{{}_{-L}D_x^{\a}u(x) + {}_{x}D_L^{\a}u(x)}{2\cos{\frac{\pi \a}{2}}} \quad \text{ for } \a\neq 1,
\]
where ${}_{-L}D_x^{\a}u(x)$ and ${}_{x}D_L^{\a}u(x)$ are left and
right Riemann-Liouville fractional derivatives \cite{Huang, Yang2010}.
%For $\alpha=1$, this relation is more complicated.
Various numerical methods are developed for the fractional Laplacian and
the fractional derivative operators.
To name a few,
Li {\it et al.}~\cite{SpecAppFracDer} considered the spectral
approximations to compute the fractional integral and the Caputo derivative.
Mao {\it et al.}~\cite{FPDE_Shen} developed an efficient
Spectral-Galerkin algorithms to solve fractional partial differential
equations(FPDEs).
Du {\it et al.}~\cite{Du2012} considered the general nonlocal
integral operator and provide guidance for numerical
simulations.
Qiao {\it et al.}~\cite{Asymptotic_Qiao} used  asymptotic methods to
examine escape probabilities analytically. Gao {\it et al.}~\cite{Ting12}  developed a finite difference method to compute
mean exit time and escape probability in the one-dimensional case. Our
previous work ~\cite{METXiao} proposed a method
to compute the mean exit time and escape probability for
two-dimensional stochastic systems with rotationally
symmetric $\a$-stable type L\'evy motions.

For stochastic systems with the asymmetric L\'evy motion, research on macroscopic quantities, such as mean exit time and escape probability,  is
still at its initial stage.
A couple of papers \cite{Lambert, completely_asy1}
considered the exit problem of the completely asymmetric L\'evy motion
(corresponding to $\beta=1$ or $-1$ in the jump measure $\nu_{\a,\beta}$).
A few of people have studied the processes for their basic properties.
Considering a completely asymmetric L\'evy process that has
absolutely continuous transition probabilities,
 Bertoin ~\cite{exponential_decay_asy} proved the decay
and ergodic properties of the transition probabilities while
Lambert~\cite{Lambert} established the existence of
the L\'evy process conditioned to stay in a finite interval.
Koren {\it et al.}~\cite{first}
investigated the first passage times and the first
passage leapovers of symmetric and completely asymmetric
L\'evy stable random motions.

The paper is organized as follows. In Section 2, we review the concepts of asymmetric
L\'evy motion, mean exit time and escape probability, and show the symmetry of solutions
to the exit problem. A numerical method and simulation results for mean exit time and escape probability are
presented in Section 3 and 4, respectively. Finally, Section 5 presents the conclusion of our paper.

\section{Concepts}
\label{sec:1}
\subsection{Asymmetric L\'evy motion}

Stable distribution, denoted by $S_{\a}(\sigma, \beta, \mu)$,
is a four-parameter family of distributions with
$\a \in (0, 2], \sigma \geq 0, \beta \in [-1, 1]$ and  $\mu \in \R$.
Usually $\a$ is
called the index of stability (or non-Gaussianity index), $\sigma$
is the scale parameter, $\beta$ is the skewness parameter and $\mu$
is the shift parameter.
It is said to be completely asymmetric
if $\beta = \pm 1$ \cite{Taqqu, Sato-99, Apple}.
Stable distribution and its profile help us understand
the behavior of the process governed by the SDE \eqref{SDE},
because $L_1$ is a random variable with the probability density functions(PDFs)
of $S_{\a}(1, \beta, 0)$. The corresponding generating triplet
is $(K_{\a,\beta},0,\nu_{\a, \beta})$,   where the constant $K_{\a,\beta}$ and the jump measure
$\nu_{\a, \beta}$ are defined below in \eqref{K}
and \eqref{asy_measure} respectively.
Some examples of $S_{\a}(1, \beta, 0)$ are shown in Figure~\ref{skewpdf}.
% For one-column wide figures use
\begin{figure}[h]
% Use the relevant command to insert your figure file.
% For example, with the graphicx package use
  \includegraphics[width=\linewidth]{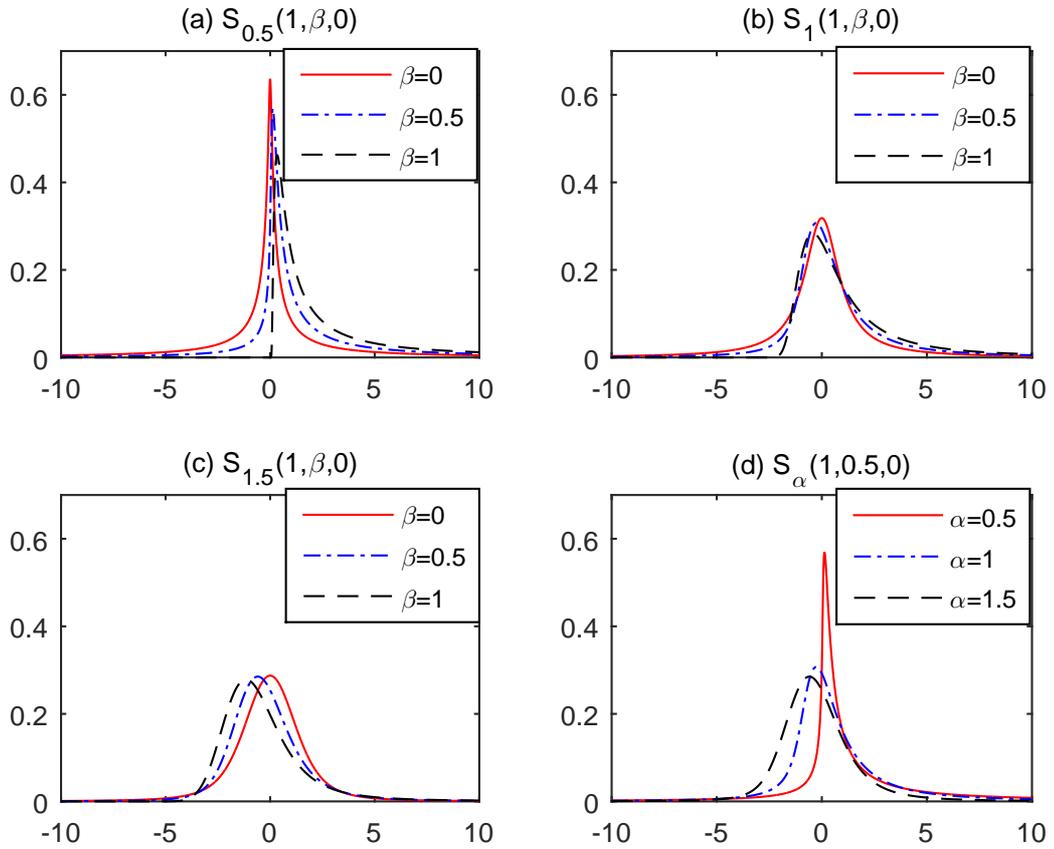}
% figure caption is below the figure
\caption{Probability density functions $S_{\a}(1,\beta,0)$ of $L_1$
for different values of $\a$ and $\beta$.
}
\label{skewpdf}       % Give a unique label
\end{figure}

For the $\a$-stable L\'evy motion, we have the corresponding  L\'evy-Khinchin formula \cite{Sato-99, Tankov, asymmetric_Hein}
\bear \label{Charfun}
    \mathbb{E}(e^{i\lam L_t}) =
    \begin{cases}
        \mbox{exp}\{-\sigma^{\a}|\lam|^{\a}t(1-i\beta \mbox{sgn}{\lam} \tan{\frac{\pi \a}2}) + i\mu \lam t \},\; \mbox{for} \; \a \neq 1,\\
        \mbox{exp}\{-\sigma|\lam|t(1+i\beta \frac{2}{\pi} \mbox{sgn}{\lam} \log{|\lam|}) + i\mu \lam t \}, \quad \mbox{for} \;\a = 1 .
    \end{cases}
\enar

For every $\varphi \in H_0^{2}(\R)$, we can obtain the generator for the solution
to the SDE \eqref{SDE} with the asymmetric
$\a$-stable L\'evy motion $L_t$  from the above formula (\ref{Charfun}) as \cite{Sato-99, Duan}
\begin{align}\label{generator}
     \mL \varphi (x) = & (f(x)+ \e K_{\a,\beta}) \varphi'(x)
    + \frac{d}{2}\varphi''(x)\notag \\
      &+ \e \int_{\mathbb{R} \setminus\{0\}} \left(\varphi(x+ y)-\varphi(x)
     -  1_{B} \; y \varphi'(x)\right) \nu_{\a,\beta}({\rm d}y),
\end{align}
where the measure $\nu_{\a,\beta}$ is given by
\bear  \label{asy_measure}
   \nu_{\a,\beta}({\rm d}y) = \frac{C_1 1_{\{0<y<\infty\}}(y)+C_2 1_{\{-\infty<y<0\}}(y)}{|y|^{1+\alpha}}({\rm d}y)
\enar
with
\bear \label{C1C2}
     C_1 = C_{\a}\frac{1+\beta}2, C_2 = C_{\a}\frac{1-\beta}2,\quad -1 \leq \beta \leq 1,
\enar
and
\bear  \label{C_alpha}
    C_{\a} =
    \begin{cases}
        \frac{\a(1-\a)}{\Gamma(2-\a)\cos{(\frac{\pi \a}2)}}\;,   &\text{ $ \a \neq 1 $;}\\
        \frac2{\pi},  \;  &\text{ $ \a = 1$.}
    \end{cases}
\enar
In Eq.~\eqref{generator} the constants $d$ and $\e$ represent the intensities of Gaussian and L\'evy noises respectively.

The constant $K_{\a,\beta}$ in \eqref{generator} is given by
\bear  \label{K}
    K_{\a,\beta}=
    \begin{cases}
        \frac{C_1-C_2}{1-\a} \;,   &\text{ $ \a \neq 1 $;}\\
        (\int_1^{\infty}\frac{\sin(x)}{x^2}{\rm d}x + \int_0^{1}\frac{\sin(x)-x}{x^2}{\rm d}x)(C_2-C_1),  \;  &\text{ $ \a = 1$.}
    \end{cases}
\enar

Furthermore, we note that $\beta = \frac{C_1-C_2}{C_1+C_2} $,
and the symmetries $C_1(-\beta)= C_2(\beta)$
and $K_{\a,-\beta} = -K_{\a,\beta}$. We point out that
when $\a \neq 1$ the stable distribution is strictly $\a$-stable, while
it is strictly $\a$-stable when $\a=1$
if and only if its L\'evy measure is symmetric.
%%--------------------------------------------------------------------------------
\iffalse
\begin{eqnarray}\label{generatorNew1}
     \mL \varphi(x) =  f(x) \varphi'(x)+ \frac{d}{2}\varphi''(x) + \e \mL_p
\end{eqnarray}
where the operator $\mL_p$ is defined as
\bear  \label{operatorL}
    \mL_{p} =
    \begin{cases}
        \int_{\mathbb{R} \setminus\{0\}} \left(\varphi(x+ y)-\varphi(x) \right)
          \nu_{\a,\beta}({\rm d}y),\;   &\text{ $ 0<\a <1 $;}\\
        \int_{\mathbb{R} \setminus\{0\}} \left(\varphi(x+ y)-\varphi(x)
     -   y_j\partial_j \varphi(x)\right) \nu_{\a,\beta}({\rm d}y),  \;  &\text{ $ 1<\a<2$.}
    \end{cases}
\enar

We can take $\mL_p$ as follows,
\begin{align}\label{generatorN1}
     \mL \varphi (x) = & \tilde{f}(x) \varphi'(x)
    + \frac{d}{2}\varphi''(x)\notag \\
      &+ \e \int_{\mathbb{R} \setminus\{0\}} \left(\varphi(x+ y)-\varphi(x)
     -  1_{B} \; y_j\partial_j \varphi(x)\right) \nu_{\a,\beta}({\rm d}y),
\end{align}
where $\tilde{f} = f(x) + \frac{C_1-C_2}{1-\a}$.
\bear
  \tilde{f} = f(x) - C(C_1-C_2).
\enar
where $C = \int_1^{\infty}\frac{\sin(x)}{x^2}{\rm d}x + \int_0^{1}\frac{\sin(x)-x}{x^2}{\rm d}x $ for $\a = 1$.
\fi

%%--------------------------------------------------------------------------------
We remark that, for people who are not too familiar with stable distributions,
the non-solid curves in
Fig.~\ref{skewpdf}(b) and (c) might be counter intuitive.
After all, in both figures,
the skewness parameter $\beta$ is positive in all these case and thus there is a
bigger tendency of jumping to the right, and yet these curves are shifted to left
near the origin. This is due to the compensation which produces a linear drift
with coefficient $K_{\a,\beta}$ given by \eqref{K}.
In all these cases, $K_{\a,\beta}$ is negative.

\subsection{Mean exit time and escape probability}
 The exit time problem is important in many fields, such as physiscs, finance and economics. The first exit time starting at $x$ from a bounded domain $D$ is defined as $ \tau{(\omega)}:= \inf \{t \geq 0, X_{t}(\omega , x) \notin  D \} $, and the mean first exit time (MET) is $u(x)=\mathbb{E}[\tau(\omega)].$

%In the following, we will consider the L\'evy process governed by with (\ref{measure}) where $\nu_{\a}$ is %the $\a$-stable L\'evy jump measure.

Assume that $f(x)$ satisfies Lipschitz condition and linear growth condition for the existence and uniqueness of solution \cite{Apple}. Due to the Dynkin's formula, the MET $u$ satisfies the following nonlocal partial differential equation \cite{Duan}
\begin{align}
    \mL u(x) &= -1, \quad \text{for} \; x \in D,
\label{exit}
\end{align}
  subject to the Dirichlet-type exterior condition,
\begin{align}\label{dec}
     u(x) &= 0 ,\quad  \text{for} \; x \in D^c,
\end{align}
where $\mL$ is the generator defined in (\ref{generator}) and $D$ is open.

 Consider the escape probability of the process $X_t $ in the SDE (\ref{SDE}). The escape probability from $D$ to $E$ is the likelihood that $X_t$ with the initial location $X_0 = x$, exits from $D$ and first lands in $E$ which belong to $D^c$, denoted as $P_E(x)=\mathbb{P}\{X_{\tau}\in E\}$. The escape probability satisfies the following nonlocal partial differential equation \cite{Huijie, Duan}
\begin{gather}
    \mathscr{L}\, P_E(x)=0, \quad x \in D, \label{eq.ep}  \nonumber\\
    P_E|_{x \in E}=1, \quad  P_E|_{x \in D^c\setminus E}=0. \label{eq.epec}
\end{gather}

\subsection{Symmetry and non-dimensionalization}
For the domain $D = (-b,b)$, the MET $u$ satisfies Eq.~(\ref{exit}).
We replace the $I_{\{|y|< 1 \}}(y)$ in Eq.~(\ref{exit}) to $I_{\{|y|< b \}}(y)$ and get \cite{Sato-99}
\begin{eqnarray}  \label{asymmetricEq2}
  \frac{d}{2} u''(x) + c(x) u'(x)
 + \eps \int_{\R \setminus\{0\}} [u(x+y)-u(x)
     -  I_{\{|y|< b \}}(y) \; y u'(x)]  \\
     \left[\frac{C_1 1_{\{0<y<\infty\}}+C_2 1_{\{-\infty<y<0\}}}{|y|^{1+\alpha}}\right]\; {\rm d}y  = -1,
\nonumber
\end{eqnarray}
where
\bear
    c(x) =
    \begin{cases}
        f(x) + \e K_{\a,\beta} + \eps(C_1-C_2)\frac{b^{1-\a}-1}{1-\a}\;,   &\text{ $ \a \neq 1 $;}\\
        f(x) + \e K_{\a,\beta} + \eps(C_1-C_2)\ln{b},  \;  &\text{ $ \a = 1$.}
    \end{cases}
\enar \label{cs}

Next, we show the solution to the MET problem has the following symmetry
when $f$ is an odd function. However, the
numerical method presented in the work does not require
$f$ be odd. Because of the application in dynamical systems, we focus on
the O-U potential($f(x) = -x$) later.

\begin{prop} [Symmetry of Solutions] \label{thm}
If $f(x)$ is an odd function and the domain $D$ is symmetric
about the origin ($D=(-b,b)$), then the MET $u$
(or, equivalently, the solution $u$ to Eq.~(\ref{asymmetricEq2}))
is symmetric about the origin
if $\beta$ changes the sign, i.e. $u_{-\beta}(-x) = u_\beta(x)$ for all $x\in (-b, b)$ where $u_\beta$ and $u_{-
\beta}$ denote the solutions corresponding to $\beta$ and $-\beta$ respectively.
\end{prop}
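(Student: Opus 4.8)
\emph{Proof proposal.} The plan is to verify directly that the reflected function $v(x):=u_\beta(-x)$ solves the same nonlocal boundary value problem as $u_{-\beta}$, and then invoke uniqueness. Since $D=(-b,b)$ is symmetric about the origin, the exterior condition transfers for free: if $x\in D^c$ then $-x\in D^c$, so $v(x)=u_\beta(-x)=0$. Thus all the work is in checking Eq.~\eqref{asymmetricEq2} with skewness $-\beta$ on $D$. I would write $\mL_\beta$ for the operator in \eqref{asymmetricEq2} with skewness parameter $\beta$ (so $\mL_\beta u_\beta=-1$ on $D$), fix $x\in D$ (hence $-x\in D$), and compute $\mL_{-\beta}v(x)$ term by term, using $v'(x)=-u_\beta'(-x)$ and $v''(x)=u_\beta''(-x)$.

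The second-order term needs nothing: $\frac d2 v''(x)=\frac d2 u_\beta''(-x)$, which already coincides with the corresponding term of $\mL_\beta u_\beta$ at $-x$. For the drift term I would use the symmetries highlighted before the proposition, namely $C_1(-\beta)=C_2(\beta)$, $C_2(-\beta)=C_1(\beta)$ and $K_{\a,-\beta}=-K_{\a,\beta}$, together with the oddness of $f$ (so $f(x)=-f(-x)$), to show that the coefficient obeys $c_{-\beta}(x)=-c_\beta(-x)$ in both cases $\a\neq 1$ and $\a=1$ — the only difference between the cases being the factor $\tfrac{b^{1-\a}-1}{1-\a}$ versus $\ln b$, which is independent of $\beta$ and $x$. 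Consequently $c_{-\beta}(x)v'(x)=\bigl(-c_\beta(-x)\bigr)\bigl(-u_\beta'(-x)\bigr)=c_\beta(-x)u_\beta'(-x)$, matching $\mL_\beta u_\beta$ at $-x$.

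The crux is the nonlocal integral. In
\[
\e\int_{\R\setminus\{0\}}\bigl[v(x+y)-v(x)-I_{\{|y|<b\}}(y)\,y\,v'(x)\bigr]\,\frac{C_1(-\beta)1_{\{y>0\}}+C_2(-\beta)1_{\{y<0\}}}{|y|^{1+\a}}\,{\rm d}y
\]
I would substitute $y=-z$: then $v(x+y)=u_\beta(-x+z)$, $v(x)=u_\beta(-x)$, $-y\,v'(x)=-z\,u_\beta'(-x)$, $|y|=|z|$, the indicator $I_{\{|y|<b\}}$ is preserved, and the half-lines $\{y>0\}$, $\{y<0\}$ are exchanged, so the kernel weight becomes $C_1(-\beta)1_{\{z<0\}}+C_2(-\beta)1_{\{z>0\}}=C_2(\beta)1_{\{z<0\}}+C_1(\beta)1_{\{z>0\}}$ by the symmetry of $C_1,C_2$. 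This is exactly the integral term of $\mL_\beta u_\beta$ evaluated at $-x$. Summing the three contributions yields $\mL_{-\beta}v(x)=\mL_\beta u_\beta(-x)=-1$ on $D$. Hence $v$ and $u_{-\beta}$ solve the same problem, which has a unique solution (by the well-posedness hypothesis on $f$, or equivalently via the probabilistic representation $u(x)=\EX_x[\tau]$ and the fact that $-L_t$ with skewness $\beta$ is in law $L_t$ with skewness $-\beta$), so $v\equiv u_{-\beta}$; that is $u_{-\beta}(x)=u_\beta(-x)$, equivalently $u_{-\beta}(-x)=u_\beta(x)$.

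I expect the only delicate point to be the bookkeeping in the change of variables $y\mapsto -z$ inside the compensated integral — making sure the compensator $I_{\{|y|<b\}}(y)\,y\,v'(x)$ and the asymmetric kernel transform consistently — and the separate (but entirely parallel) handling of the $\a=1$ case, where $K_{\a,\beta}$ and the $b$-dependent drift correction have a different explicit form yet satisfy the same sign symmetry under $\beta\mapsto-\beta$.
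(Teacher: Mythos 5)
Your proposal is correct and follows essentially the same route as the paper: reflect the solution, change variables $y\mapsto -y$ in the compensated integral, use the symmetries $C_1(-\beta)=C_2(\beta)$ and $K_{\a,-\beta}=-K_{\a,\beta}$ together with the oddness of $f$ to get $c_{-\beta}(x)=-c_\beta(-x)$, and conclude by uniqueness. The only cosmetic difference is that you reflect $u_\beta$ and verify the $-\beta$ equation, whereas the paper reflects $u_{-\beta}$ and verifies the $\beta$ equation; these are interchangeable.
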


\begin{proof}
 Since Eq.~(\ref{asymmetricEq2}) is valid for all $-1\leqslant \beta \leqslant 1$
 and $-b\leqslant x \leqslant b$, $u_{-\beta}(-x)$ satisfies the following equation,

\bearn
    && \frac{d}{2} u_{-\beta}''(-x) + c(-x) u_{-\beta}'(-x)   \\
 +&& \eps \int_{\R \setminus\{0\}} [u_{-\beta}(-x+y)-u_{-\beta}(-x)
     -  I_{\{|y|< b \}}(y) \; y u_{-\beta}'(-x)] \nu_{\a, -\beta}( {\rm d}y) =-1. \\
   %  \left[\frac{C^1_{\a,\beta} 1_{\{0<y<\infty\}}+C^2_{\a,\beta} 1_{\{-\infty<y<0}\}}{|y|^{1+\alpha}}\right]\; {\rm d}y  \\
   % &=& C^1_{\a, \beta} \int_0^{\infty} \frac{[u(-x+y)-u(-x) -  I_{\{|y|< 1 \}}(y) \; y u'(-x)]}{|y|^{1+\a}} \; {\rm d}y \\
    % &+& C^2_{\a, \beta} \int_{-\infty}^0 \frac{[u(-x+y)-u(-x) -  I_{\{|y|< 1 \}}(y) \; y u'(-x)]}{|y|^{1+\a}} \; {\rm d}y \\
    % &=& C^1_{\a, -\beta} \int_0^{\infty} \frac{[u(-x-y)-u(-x) +  I_{\{|y|< 1 \}}(y) \; y u'(-x)]}{|y|^{1+\a}} \; {\rm d}y \\
    % &+& C^2_{\a, -\beta} \int_{-\infty}^0 \frac{[u(-x-y)-u(-x) +  I_{\{|y|< 1 \}}(y) \; y u'(-x)]}{|y|^{1+\a}} \; {\rm d}y \\
    % &=& \int_{\R \setminus\{0\}} \frac{[u(-x-y)-u(-x) +  I_{\{|y|< 1 \}}(y) \; y u'(-x)]}{|y|^{1+\a}} \; \nu_{\a, -\beta}({\rm d}y) \\
    % &=& \int_{\R \setminus\{0\}} \frac{[v(x+y)-v(x) -  I_{\{|y|< 1 \}}(y) \; y v'(x)]}{|y|^{1+\a}} \; \nu_{\a, -\beta}({\rm d}y) \\
\enarn
Define $\bar{u}(x) = u_{-\beta}(-x)$. We can see $\bar{u}'(x) = -u_{-\beta}'(-x)$. Taking $y' = -y$, we have
\bearn
    &&\int_{\R \setminus\{0\}} [u_{-\beta}(-x+y)-u_{-\beta}(-x) -  I_{\{|y|< b \}}(y) \; y u_{-\beta}'(-x)] \nu_{\a, -\beta}( {\rm d}y) \\
     &=& \int_{\R \setminus\{0\}} [\bu(x-y)-\bu(x) +  I_{\{|y|< b \}}(y) \; y \bu'(x)]
        \left[\frac{C_2 1_{\{0<y<\infty\}}+C_1 1_{\{-\infty<y<0\}}}{|y|^{1+\alpha}}\right]\; {\rm d}y  \\
     &=& \int_{\R \setminus\{0\}} [\bu(x+y')-\bu(x) -  I_{\{|y'|< b \}}(y') \; y' \bu'(x)]
        \left[\frac{C_1 1_{\{0<y'<\infty\}}+C_2 1_{\{-\infty<y'<0\}}}{|y'|^{1+\alpha}}\right]\; {\rm d}y'.
\enarn

When $f(-x) = -f(x)$, we have
\bearn
    c(-x) =
    \begin{cases}
        -f(x) + \e K_{\a,\beta} + \eps C_{\a}\beta\frac{b^{1-\a}-1}{1-\a}\;,   &\text{ $ \a \neq 1 $;}\\
        -f(x) + \e K_{\a,\beta} + \eps C_{\a}\beta \ln{b},  \;  &\text{ $ \a = 1$.}
    \end{cases}
\label{cs1}
\enarn
Thus $c_{-\beta}(-x) = - c_\beta(x)$ if $f$ is an odd function,
where $c_\beta$ and $c_{-\beta}$ denote the
function $c$ corresponding to $\beta$ and $-\beta$ respectively.

Using
\[
    \frac{d}{2} u_{-\beta}''(-x) + c_{-\beta}(-x) u_{-\beta}'(-x) = \frac{d}{2} \bu''(x) + c_\beta(x) \bu'(x),
\]
we get,
\bearn
    \frac{d}{2} \bu''(x) + c(x) \bu'(x) + \eps \int_{\R \setminus\{0\}} [\bu(x+y')-\bu(x)
     -  I_{\{|y'|< b \}}(y') \; y' \bu'(x)]  \\
     \left[\frac{C_1 1_{\{0<y'<\infty\}}+C_2 1_{\{-\infty<y'<0\}}}{|y'|^{1+\alpha}}\right]\; {\rm d}y'  = -1.
\enarn
Thus, we have shown $\bar{u}(x)$ satisfies the same Eq.~(\ref{asymmetricEq2}) if $f(-x) = -f(x)$.
Due to uniqueness of the solution, we have $u_{-\beta}(-x) = u_\beta(x)$.     $ \Box $
\end{proof}

To keep the computational domain fixed as $[-1,1]$,
we perform the change of variable
\begin{equation}
s = x/b, \quad \text{ and } v(s) := u(bs).
\label{eq.cov}
\end{equation}
% If $b>1$, $c(x') = f(x')+\frac{b^{1-\a}-1}{1-\a}[C_1-C_2]$, while $b<1$, $c(x') = f(x')+\frac{1-b^{1-\a}}{1-\a}[C_1-C_2]$
Then, $\displaystyle{
    \frac{ {\rm d} u}{ {\rm d} x} = \frac1b \frac{{\rm d} v}{{\rm d} s}, \;  \frac{\de^2 u}{\de x^2} = \frac1{b^2} \frac{\de^2 v}{\de s^2}
}$.
Let $y = br$, we have
\bess
     && \int_{\R \setminus\{0\}} [u(x+y)-u(x)
     -  I_{\{|y|< b \}}(y) \; y u'(x)]
     \left[\frac{C_1 1_{\{0<y<\infty\}}+C_2 1_{\{-\infty<y<0\}}}{|y|^{1+\alpha}}\right]\; {\rm d}y \\
     &=&b^{-\a} \int_{\R \setminus\{0\}} [v(s+r)-v(s)
     -  I_{\{|r|< 1 \}}(r) \; r  v'(s)]
    \left[\frac{C_1 1_{\{0<r<\infty\}}+C_2 1_{\{-\infty<r<0\}}}{|r|^{1+\alpha}}\right] {\rm d}r
\eess
Finally, the equation for the MET (\ref{asymmetricEq2}) becomes
\begin{eqnarray}  \label{asymmetricEq3}
  &&\frac{d}{2b^2} \frac{\de^2 v}{\de s^2} + \frac{c(bs)}{b} \frac{\de v}{\de s} \nonumber\\
 &+&\eps b^{-\a} \int_{\R \setminus\{0\}} [v(s+r)-v(s)
     -  I_{\{|r|< 1 \}}(r) \; r v'(s)]
    \left[\frac{C_1 1_{\{0<r<\infty\}}+C_2 1_{\{-\infty<r<0\}}}{|r|^{1+\alpha}}\right] {\rm d}r  \nonumber\\
 &=& -1.
\end{eqnarray}

\section{Numerical methods}

In this section, we describe the numerical methods for solving the MET $v(s)$
in Eq.~\eqref{asymmetricEq3} on the fixed computational domain $s\in (-1,1)$.
The solution for the MET $u$ in the original equations (\ref{exit})
and \eqref{dec}
for the symmetric domain $D=(-b,b)$ is obtained from $u(x) \equiv v(x/b)$.

\subsection{Reformulation}

%\quad Without loss of generality , we consider $x \in (-1,1)$ in the following,
Before we present our numerical schemes, we first reformulate
the integral in \eqref{asymmetricEq3}, denoted by
\begin{eqnarray}  \label{asymmetricEq}
  I := \int_{\R \setminus\{0\}} [v(s+r)-v(s)
     -  I_{\{|r|< 1 \}}(r) \; r v'(s)]  %\\
     \frac{C_1 1_{\{0<r<\infty\}}(r)+C_2 1_{\{-\infty<r<0\}}(r)}{|r|^{1+\alpha}}\; {\rm d}r.
%\nonumber
\end{eqnarray}
%with $\beta = \frac{C_1-C_2}{C_1+C_2}$, $C_1 = C_{\a}\frac{1+\beta}2$ and $C_2 = C_{\a}\frac{1-\beta}2$, where
%$$
%    C_{\a} =
%    \begin{cases}
%        \frac{\a(1-\a)}{\Gamma(2-\a)\cos{(\frac{\pi \a}2)}}\;,   &\text{ $ \a \neq 1 $;}\\
%        \frac2{\pi},  \;  &\text{ $ \a = 1$.}
%    \end{cases}
%$$
We decompose $I = C_1 I_1 + C_2 I_2$, where
\bear \label{DecomI1}
     I_1&=&\int_{\R^+}\frac{v(s+r)-v(s)
     -  I_{\{|r|< 1 \}}(r) \; r v'(s)}{|r|^{1+\alpha}}\; {\rm d}r , \\
      I_2 &=& \int_{\R^-}\frac{v(s+r)-v(s)
     -  I_{\{|r|< 1 \}}(r) \; r v'(s)}{|r|^{1+\alpha}}\; {\rm d}r .
\enar

Using the condition \eqref{dec} exterior to the domain $D$, i.e., $v(s)$ vanishes when $|s| \geq 1$, we obtain
\bear
    I_1 = -\frac{v(s)}{\a}(1-s)^{-\a} - v'(s)g(s)
          + \int_0^{1-s} \frac{v(s+r)-v(s)-r v'(s)}{r^{1+\a}} {\rm d}r,
\label{i1p}
\enar
for $s > 0$;
\begin{eqnarray}
    I_1 &=& -\frac{v(s)}{\a}(1-s)^{-\a} + \int_1^{1-s} \frac{v(s+r)-v(s)}{r^{1+\a}} {\rm d}r \nonumber \\
         &&+ \int_0^1 \frac{v(s+r)-v(s)-rv'(s)}{r^{1+\a}} {\rm d}r,
\label{DecomI1s}
\end{eqnarray}
for $s<0$,
where
\bear
    g(s) =
    \begin{cases}
        \frac{1-(1-|s|)^{1-\a}}{1-\a}\;,   &\text{ $ \a \neq 1 $;}\\
         -\ln(1-|s|),  \;  &\text{ $ \a = 1$.}
    \end{cases}
\label{g1}
\enar
Similarly,
\bear
  I_2 &=& \int_1^{1+s} \frac{v(s-y)-v(s)}{y^{1+\a}} \; {\rm d}y - \frac{v(s)}{\a} (1+s)^{-\a} \nonumber\\
   &&+\int_0^1 \frac{v(s-y)-v(s)+y v'(s)}{y^{1+\a}} \;{\rm d}y,
\label{i2p}
\enar
for $s>0$;
\bear
   I_2 = -\frac{v(s)}{\a} (1+s)^{-\a} + v'(s)g(s)
   + \int_0^{1+s} \frac{v(s-y)-v(s)+y v'(s)}{y^{1+\a}} \; {\rm d}y,
\label{i2n}
\enar
for $s<0$.

\iffalse
\[
    C_q = -\zeta(\a-1)\frac{v''(s)}2 h^{2-\a} - \zeta(\a-2)\frac{v^{(4)}(x)}6 h^{3-\a}
\]
\fi

Now, combining the above results \eqref{i1p}--\eqref{i2n},
we rewrite (\ref{asymmetricEq3}) as following
\bear \label{LastEq1}
    \frac{d}{2b^2} v''(s) &+& \left(\frac{c(bs)}{b}- \eps b^{-\a} C_1g(s) \right) v'(s)
     - \eps b^{-\a}\frac{v(s)}{\a}\left[ C_1(1-s)^{-\a} + C_2(1+s)^{-\a}\right]  \nonumber\\
     &+& \eps b^{-\a} C_1\int_0^{1-s} \frac{v(s+r)-v(s)-r v'(s)}{r^{1+\a}} {\rm d}r +
     \eps b^{-\a} C_2\int_1^{1+s} \frac{v(s-y)-v(s)}{y^{1+\a}} \; {\rm d}y \nonumber\\
      &+& \eps b^{-\a} C_2 \int_0^1 \frac{v(s-y)-v(s)+y v'(s)}{y^{1+\a}} \;{\rm d}y = -1,
\enar
for $s \geq 0$, while
\bear \label{LastEq2}
    \frac{d}{2b^2} v''(s) &+& \left(\frac{c(bs)}{b}+ \eps b^{-\a} C_2g(s) \right) v'(s)
     -\eps b^{-\a}\frac{v(s)}{\a}\left[ C_1(1-s)^{-\a} +  C_2(1+s)^{-\a}\right]  \nonumber\\
     &+& \eps b^{-\a} C_1\int_0^{1} \frac{v(s+r)-v(s)-r v'(s)}{r^{1+\a}} {\rm d}r +
      \eps b^{-\a} C_1 \int_1^{1-s} \frac{v(s+r)-v(s)}{r^{1+\a}} \; {\rm d}r \nonumber\\
      &+& \eps  b^{-\a} C_2\int_0^{1+s} \frac{v(s-y)-v(s)+y v'(s)}{y^{1+\a}} \;{\rm d}y = -1,
\enar
for $s<0$.

For completeness, we provide the equations for finding the escape probability
in Eq.~(\ref{eq.epec}). They are different than those for the MET
due to the difference in the exterior condition. To be specific, we take
$D = (-b,b)$ and $E = [b,\infty)$, then the Eq.~(\ref{eq.epec})
becomes
\bear \label{LastEq11}
    \frac{d}{2b^2} v''(s) &+& \left(\frac{c(bs)}{b}- \eps b^{-\a} C_1g(s) \right) v'(s)
     - \eps b^{-\a}\frac{v(s)}{\a}\left[ C_1(1-s)^{-\a} + C_2(1+s)^{-\a}\right]  \nonumber\\
     &+& \eps b^{-\a} C_1\int_0^{1-s} \frac{v(s+t)-v(s)-t v'(s)}{t^{1+\a}} {\rm d}t +
     \eps b^{-\a} C_2\int_1^{1+s} \frac{v(s-y)-v(s)}{y^{1+\a}} \; {\rm d}y \nonumber\\
      &+& \eps b^{-\a} C_2 \int_0^1 \frac{v(s-y)-v(s)+y v'(s)}{y^{1+\a}} \;{\rm d}y = -\frac{\eps b^{-\a} C_1}{\a}(1-s)^{-\a},
\enar
for $s \geq 0$, and
\bear \label{LastEq12}
    \frac{d}{2b^2} v''(s) &+& \left(\frac{c(bs)}{b}+ \eps b^{-\a} C_2g(s) \right) v'(s)
     -\eps b^{-\a}\frac{v(s)}{\a}\left[ C_1(1-s)^{-\a} +  C_2(1+s)^{-\a}\right]  \nonumber\\
     &+& \eps b^{-\a} C_1\int_0^{1} \frac{v(s+t)-v(s)-t v'(s)}{t^{1+\a}} {\rm d}t +
      \eps b^{-\a} C_1 \int_1^{1-s} \frac{v(s+t)-v(s)}{t^{1+\a}} \; {\rm d}t \nonumber\\
      &+& \eps  b^{-\a} C_2\int_0^{1+s} \frac{v(s-y)-v(s)+y v'(s)}{y^{1+\a}} \;{\rm d}y = -\frac{\eps b^{-\a} C_1}{\a}(1-s)^{-\a},
\enar
for $s<0$.

\subsection{Discretization}
We are ready to describe our discretization based on the formulation
in the equations \eqref{LastEq1}
and \eqref{LastEq2}.
We divide the computational domain $[-1,1]$ by $2J$ subintervals:
 $s_j = jh, -J \leq j \leq J$ with each subinterval having
the size $h = 1/J$.
Denote the numerical solution to the unknown MET $v$
by the vector $\mathbf{V} = V_{-J:J}$,  where the component $V_j$
approximates $v_j \equiv v(s_j)$ for $-J \leq j \leq J$. Note that
$V_{-J}=V_{J}=0$ from the exterior condition \eqref{dec}.

The singular integrals in Eqs.~(\ref{LastEq1}) and \eqref{LastEq2}
need special quadrature rules.
Following the quadrature error analysis of Sidi and Israeli \cite{Sidi}, we have
the following leading-order error for the "punch-hole" trapezoidal rule
for the weakly singular integrals in (\ref{LastEq1})
\bear
    &&C_1\int_0^{1-s} \frac{v(s+r)-v(s)-r v'(s)}{r^{1+\a}} {\rm d}r +
    C_2\int_0^1 \frac{v(s-r)-v(s)+r v'(s)}{r^{1+\a}} {\rm d}r \nonumber \\
    &=& h \sum\limits_{j=1}^{J_{1-s}}\!{'} G_1(r_j) +  h \sum\limits_{k=1}^{J}\!{'}G_2(r_k) +C_p.
\label{eq.site}
\enar
where $G_1(r) = \dfrac{v(s+r)-v(s)-r v'(s)}{r^{1+\a}}$,
$G_2(r) = \dfrac{v(s-r)-v(s)+r v'(s)}{r^{1+\a}}$ and the leading-order errors
are
\bear
    C_p &=& -C_\a \zeta(\a-1)\frac{v''(s)}2 h^{2-\a} - \beta C_\a \zeta(\a-2)\frac{v^{'''}(s)}6 h^{3-\a} \nonumber\\
    &&-[C_1 G_1'(1-s) +C_2 G_2'(1)]\frac{B_2}{2}h^2 + \mathcal {O}(h^{4-\a}).
\label{eq.tee}
\enar
We denote $\displaystyle{\sum\limits_{k=1}^{J}\!{'}}$ as the summation
where the term with the upper limit $k=J$ is multiplied by $1/2$
and $J_{1-s}$ is the index corresponding to $1-s$.
$\zeta$ is the Riemann zeta function.

Define
\begin{equation}
C_h = \frac{d}{2b^2}-\frac{\eps b^{-\a}C_{\a}\zeta(\a-1)}{2}h^{2-\a}.
\label{eq.ch}
\end{equation}
Using central differencing for the derivatives
and modifying the "punched-hole" trapezoidal rule with the leading
order term $O(h^{2-\a})$ in \eqref{eq.tee}, we get
the $j$-th equation discretizing the right-hand side(RHS) of (\ref{LastEq1})
\begin{align}\label{eq:Discrete1}
    L_{j, :}\mathbf{V}:=& C_h\frac{V_{j+1}-2V_j+V_{j-1}}{h^2}+\left(\frac{c(bs_j)}{b}- \eps b^{-\a} C_1g(s_j)\right) \frac{V_{j+1}-V_{j-1}}{2h} \nonumber\\ %\left(\frac{U_{j+1}-U_{j-1}}{2h}\right)
    %\left(\frac{U_{j+1}-2U_{j}+U_{j-1}}{h^2}\right)\nonumber\\
     &- \eps b^{-\a} \frac{V_j}{\a}\left[ C_1(1-s_j)^{-\a}+C_2(1+s_j)^{-\a}\right] \nonumber  \\
    &+ \eps b^{-\a} C_1 h\sum_{k=j+1}^{J}\!\!{'} \frac{V_k-V_j-(s_k-s_j)\frac{V_{j+1}-V_{j-1}}{2h}}{(s_k-s_j)^{\a+1}}
    + \eps b^{-\a} C_2 h\sum_{k=-J}^{-J+j}\;\;\!\!\!\! \frac{V_k-V_j}{(s_j-s_k)^{1+\a}} \nonumber\\
    &+ \eps b^{-\a} C_2 h\sum_{k=-J+j}^{j-1}\!\!\!\!{''} \frac{V_k-V_j-(s_k-s_j)\frac{V_{j+1}-V_{j-1}}{2h}}{(s_j-s_k)^{\a+1}}
\end{align}
for $0 \leq j\leq J-1$
where the summation symbol $\sum$
means the terms of both end indices are multiplied by $\frac12$,
$\sum{''}$ means that only the term of the bottom index
is multiplied by $\frac12$.
Similarly,
\begin{align}\label{eq:Discrete2}
    L_{j, :}\mathbf{V}:=&C_h \frac{V_{j+1}-2V_j+V_{j-1}}{h^2}  %\left(\frac{U_{j+1}-U_{j-1}}{2h}\right)
    + \left(\frac{c(bs_j)}{b}+ \eps b^{-\a} C_2g(s_j)\right) \frac{V_{j+1}-V_{j-1}}{2h} \nonumber \\%\left(\frac{U_{j+1}-2U_{j}+U_{j-1}}{h^2}\right)\nonumber\\
     &- \eps b^{-\a} \frac{V_j}{\a}\left[ C_1(1-s_j)^{-\a}+C_2(1+s_j)^{-\a}\right] \nonumber  \\
    &+ \eps b^{-\a} C_1 h\sum_{k=j+1}^{J+j}\!\!\!{'} \frac{V_k-V_j-(s_k-s_j)\frac{V_{j+1}-V_{j-1}}{2h}}{(s_k-s_j)^{\a+1}}
    + \eps b^{-\a} C_1 h\sum_{k=J+j}^{J}\;\;\!\!\!\! \frac{V_k-V_j}{(s_k-s_j)^{1+\a}} \nonumber\\
    &+ \eps b^{-\a} C_2 h\sum_{k=-J}^{j-1}\!\!\!{''} \frac{V_k-V_j-(s_k-s_j)\frac{V_{j+1}-V_{j-1}}{2h}}{(s_j-s_k)^{\a+1}}
\end{align}
for $-J+1 \leq j \leq -1$.

We can write the discretized equations (\ref{eq:Discrete1}) and (\ref{eq:Discrete2})
simply as $L\; V_{-J+1:J-1} = - \vec{1}$, where $L$ is the $(2J-1)$ by $(2J-1)$
coefficient matrix, $\vec{1}$ is the vector of ones with dimension $2J-1$.
The dense system of linear equations is solved by the Krylov-subspace iterative
method GMRES\cite{GMRES}. We point out that, for $0 < \a \leq 1$,
we use one-sided finite difference formula for derivatives
of $u$ at the grid points that are closest to the end points $s=\pm 1$,
 because our results show that the solutions could be discontinuous at
the end points in this case.

\section{Numerical results}

\subsection{Validation}

\befig[h]
\includegraphics*[width = \linewidth]{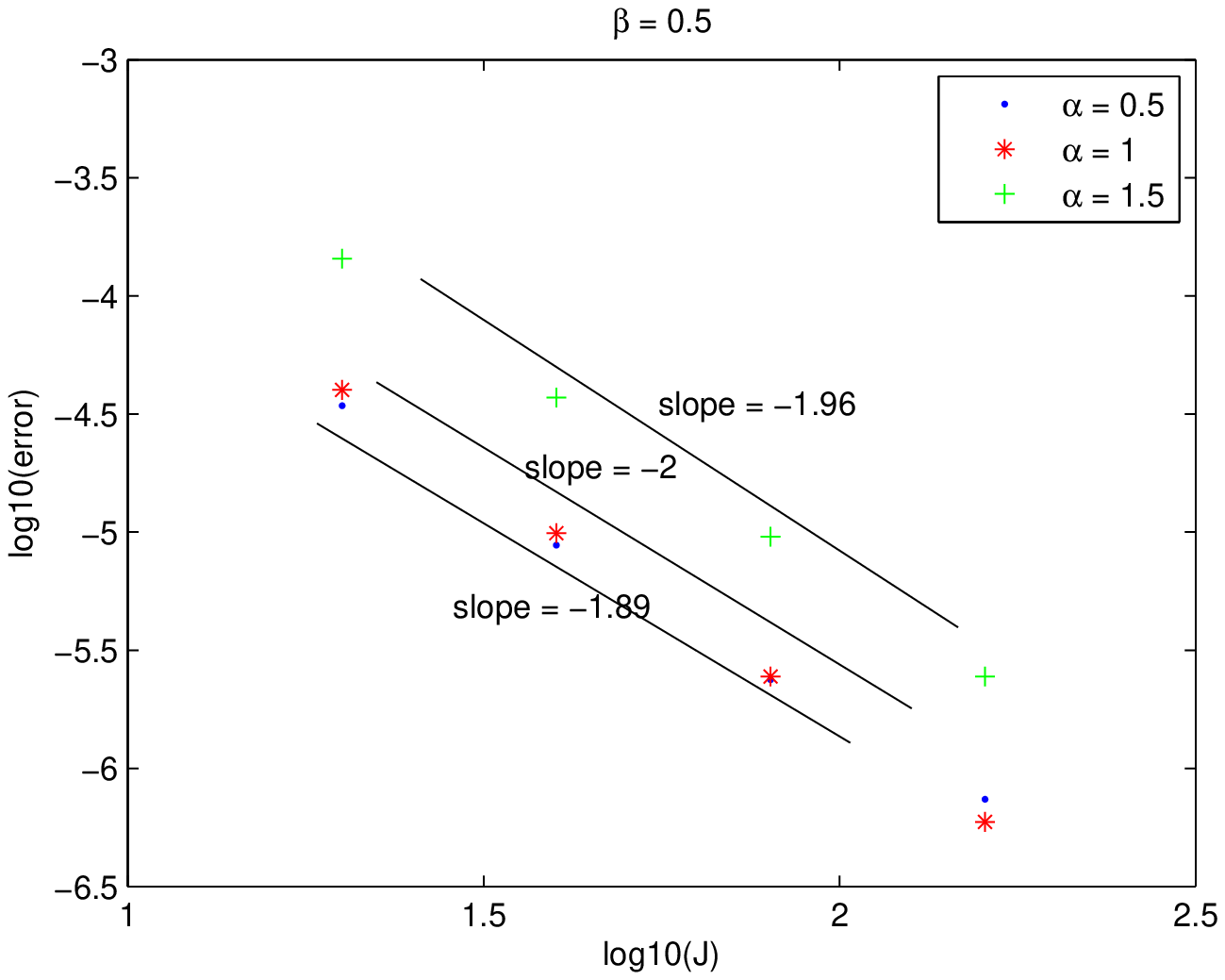}
%\end{center}
\caption{The error of the numerical solutions to the constructed equation
$\mL u = \mL (1+x^2)_+$ with the RHS given in Eqs.~\eqref{rhs} (for $\a\neq 1$)
or \eqref{rhs1} (for $\a=1$),   as a function of the resolution $J$.
The errors evaluated at $x=-0.5$ are shown for the different values
of $\a=0.5, 1, 1.5$
but the fixed value of $\beta=0.5, d = 0, f \equiv 0, \eps = 1$.}
\label{CheckOrder}
\eefig
Since we are not aware of any explicit exact solution for $\beta \neq 0$,
we let $u(x) = (1-x^2)_+$, i.e. $u(x) = 1-x^2 $ for $x \in (-1, 1)$ and
$u(x) = 0 $ otherwise, together with
$d = 0, f \equiv 0, \eps = 1$ and the domain $D = (-1,1)$.
We compute $\mL u$ where the operator $\mL$ is defined in \eqref{generator}
or equivalently the left-hand side(LHS) of Eq.~(\ref{asymmetricEq2})
and obtain
\bear \label{rhs}
   \mL (1-x^2)_+ &=&
        C_1\left[-\frac{(1-x)^{2-\a}}{2-\a}-\frac{2x((1-x)^{1-\a}-1)}{1-\a}
          -\frac{(1+x)(1-x)^{1-\a}}{\a}\right]  \\
     &&   +C_2 \left[-\frac{(1+x)^{2-\a}}{2-\a}-\frac{2x(1-(1+x)^{1-\a})}{1-\a}
       -\frac{(1-x)(1+x)^{1-\a}}{\a}\right],  \nonumber
\enar
for $ \a \neq 1 $;
\bear \label{rhs1}
   \mL (1-x^2)_+ & = &
         -2(C_1+C_2)-2x[C_1\ln(1-x)-C_2\ln(1+x)],
\enar
for $\a=1$.

Replacing the RHS $-\mathbf{1}$ of the MET equation \eqref{exit} by $\mL (1+x^2)_+$ given in \eqref{rhs} and \eqref{rhs1},
we have created a known solution $u(x) = (1-x^2)_+$.
We compare the numerical solutions using
our discretizations \eqref{eq:Discrete1}
and \eqref{eq:Discrete2} (with the new RHS)
against the analytical expression
$u(x) = (1-x^2)_+$ for different resolutions $J = 20, 40, 80, 160$.
Figure~\ref{CheckOrder} shows that the numerical order of convergence
based on the computed errors is close to
two for all three values of $\a=0.5, 1, 1.5$ tested
at the fixed point $x= -0.5$. The convergence order is two, expected from the
error analysis of our numerical method. In the verification, we have chosen
$\beta=0.5, d=0, f\equiv 0, \eps =1$.

\befig[h]
\includegraphics[width = \linewidth]{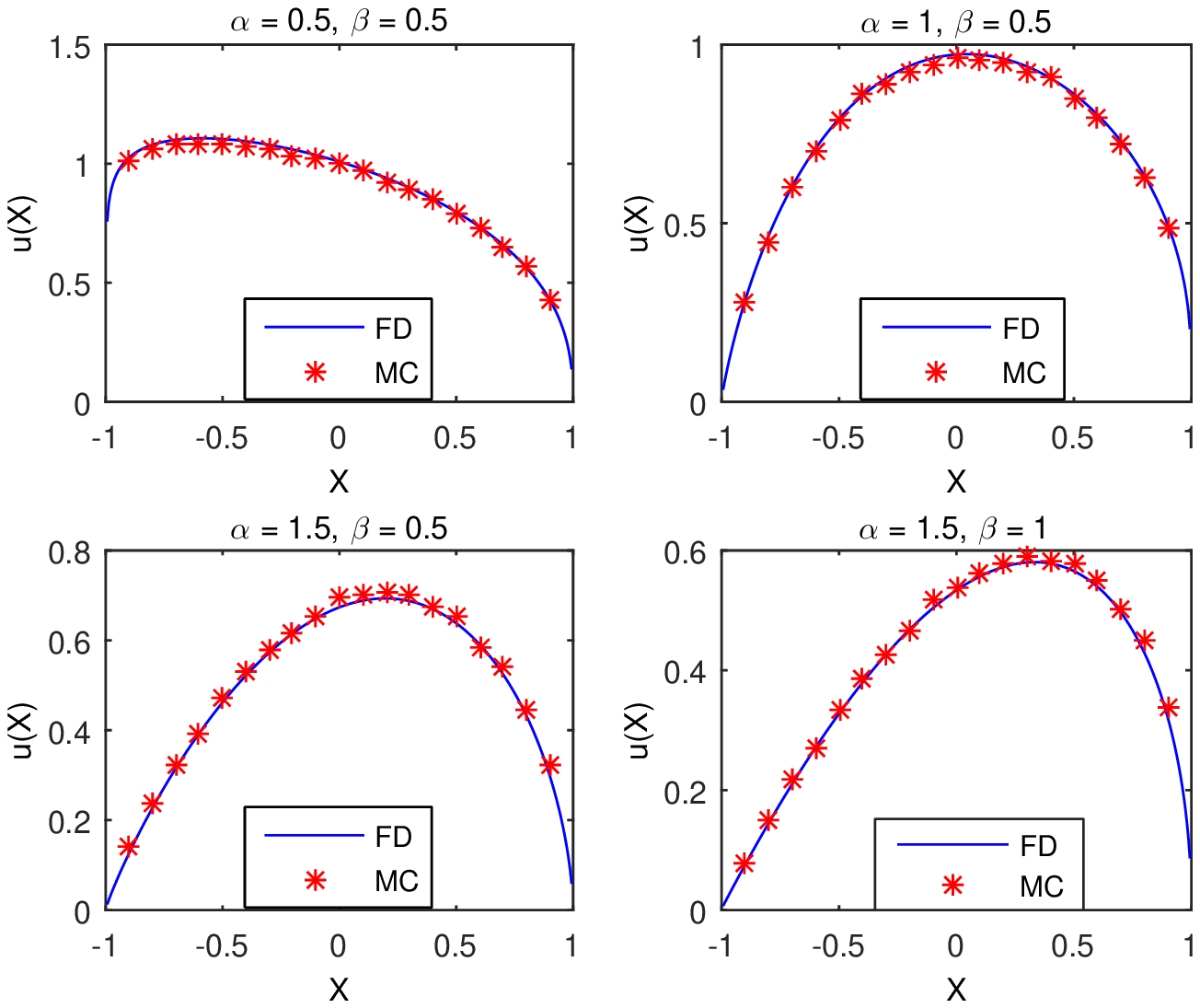}
\caption{Comparison between the two numerical solutions
of the MET for $f\equiv 0, d=0, \eps=1$ and domain $D = (-1,1)$,
one from our numerical method (labeled 'FD') and the other from
the direct Monte Carlo simulation (labeled 'MC'). The results are
compared for different combinations of $\a$ and $\beta$ values.}
\label{CompMCDF}
\eefig
As a second verification,
we compare our numerical solutions to the MET problem \eqref{exit}
and \eqref{dec} with those obtained from solving the SDE \eqref{SDE}
directly using Monte Carlo method. Both MET solutions are shown
in Fig.~\ref{CompMCDF}
for $\beta =0.5$ or $1$ and $\a = 0.5, 1, 1.5$ with $f\equiv 0, d=0$ and
the domain $D = (-1, 1)$.
To obtian the Monte Carlo solutions, we have taken
$10,000$ sample paths and the time step size $\Delta t = 0.001$ for each
starting point $x$ within $D$. The results in Fig.~\ref{CompMCDF} show
that the numerical solutions obtained from two different methods agree well.
This numerical experiment demonstrates that our numerical method
for computing the macroscopic quantities such as the MET is much more
computational efficient than the Monte Carlo method, because our numerical
error decreases quickly as the number of subintervals $J$ increases but
the Monte Carlo solution converges slowly
as the number of sample paths increases as expected.
For comparing the computational costs of the two methods,
we show the CPU time in Table~\ref{RunErr} for the Monte Carlo method
for the case we know the analytic solution for the MET:
 $\a=1.5, \beta=0, f\equiv 0, d=0,\e=1$.
 Table~\ref{RunErr} shows that, in order to achieve the two-significant-digit
 accuracy in MET, one needs more than $8000$ sample paths and
 the CPU time for the Monte Carlo simulations is about $890$ seconds
 for one starting point $x=0$ while our method needs only
 $1.3$ seconds for $40$ starting positions
 and the error is less than $1.4\times10^{-3}$.

\begin{table}[!hbp]
%\centering
%\caption{\label{RunTime} The Run time and MET for Monte Carlo simulation with $\a=1.5, \beta = 0.5$ in
%Fig.~\ref{CompMCDF}.}
%\begin{tabular}{c|c|c|c|c}

%\hline

%\hline
%   sample paths  &  1000    &   2000 & 4000 &8000          \\
%\hline
%time(s)   &  83.2    &     167.9 & 400.7 & 640.9    \\
%\hline
%MET &  0.7095   &   0.6975 & 0.6817 & 0.6849      \\
%\hline

%\end{tabular}

\caption{\label{RunErr} The CPU times and the errors in the Monte Carlo simulation with $\a=1.5, \beta = 0, f\equiv 0, d=0, \e=1, x=0$.}
\centering
\begin{tabular}{c|c|c|c|c}

\hline

\hline
   sample paths  &  1000    &   2000 & 4000 &8000          \\
\hline
time(s)   &  95.9    &    163 & 320.6 & 891.8   \\
\hline
Error($10^{-2}$) &  4.12   &  1.64 & 1.03 & 0.81     \\
\hline

\end{tabular}
\end{table}

%\subsection{Convergence}
\begin{figure}[h]
% Use the relevant command to insert your figure file.
% For example, with the graphicx package use
  \includegraphics*[width = \linewidth]{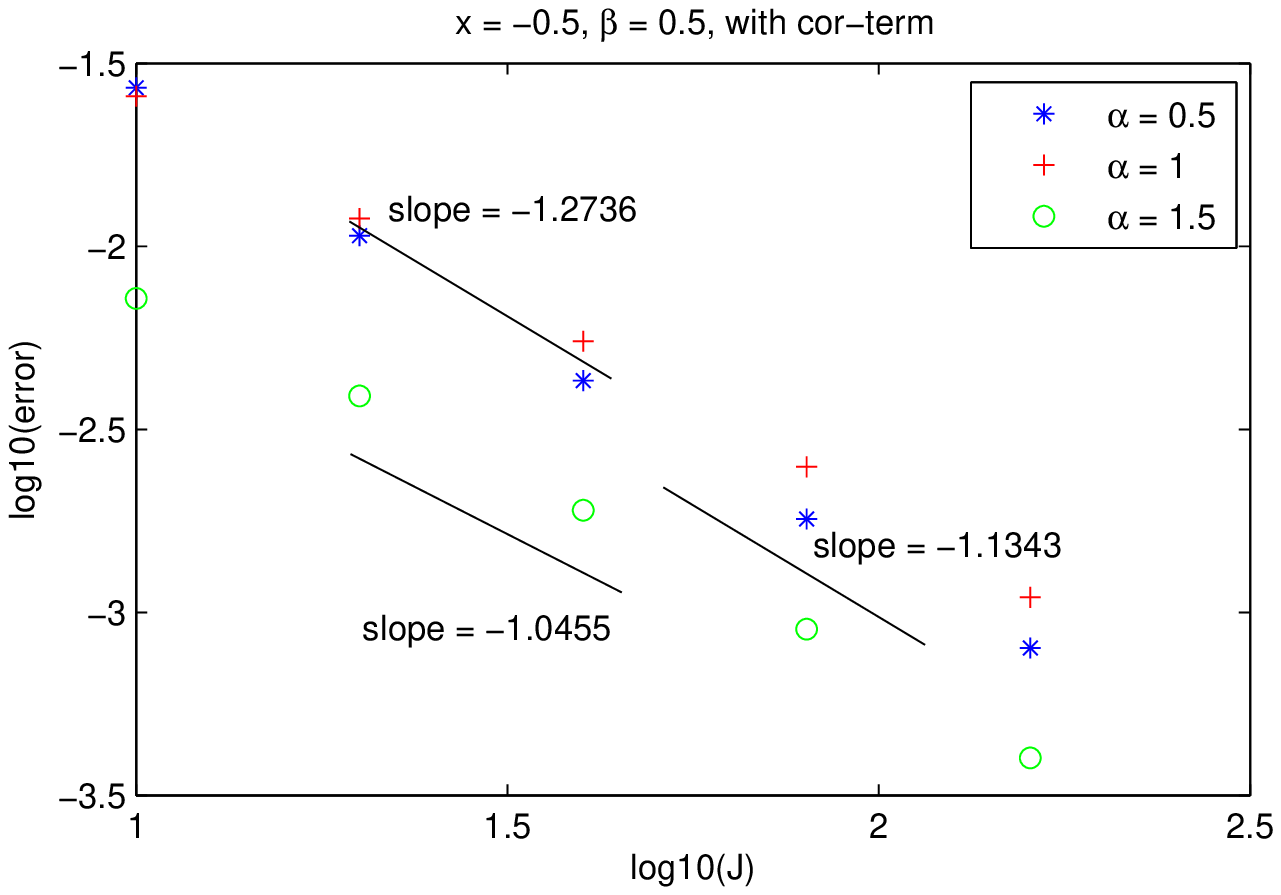}
% figure caption is below the figure
\caption{Errors for the numerical solutions to the MET
for different resolutions $J=10, 20, 40, 80$ and
$160$ and $\a=0.5, 1, 1.5$ with $\beta = 0.5$, $f\equiv 0$, $d=0$ and $D=(-1,1)$. }
\label{asymm_order}       % Give a unique label
\end{figure}
To estimate the convergence orders for computing the MET,
we take the numerical solution for the high resolution $J = 1280$
as the ''true'' solution. % denoted by $\mathbf{V}_{1280}$.
Figure~\ref{asymm_order} shows the errors of the solutions at $x = -0.5$
as $J$ increases from $10$ to $160$ by doubling,
for different values of $\a = 0.5, 1, 1.5$,
$\beta = 0.5$, $f\equiv 0$, $d=0$ and $D=(-1,1)$.
According to the results, we find that our numerical method can only reach
first-order accuracy and the numerical error for $\a = 1.5$ is smaller than
those in the cases of $\a = 0.5, 1$. Unlike the previous solution $(1-x^2)_+$,
the MET solution is known to have divergent derivatives at the boundary
points. \cite{Ting12}
Consequently, the error analysis in \eqref{eq.tee} does not apply
as the derivatives of the solution are unbounded.
%{\bf Xiao, could you verify if the following statement is true?}
%Also, we have computed the convergence orders
%using $l_2$-norm for the error and found that the orders are approximately
%one  for these asymmetric cases.

\subsection{Mean exit time}

    For processes that are affected by asymmetric L\'evy noise,
little is known about the effects
of the different factors in the L\'evy noise on the MET. In this section,
we  examine the effects of the index of stability $\a$,
 the skewness parameter $\beta$, the size of the domain $(-b,b)$,
the drift $f$, the intensity of Gaussian part $d$ and non-Gauassian part
$\eps$ of the noise.
For odd drift functions $f(x)$, as shown in Proposition~\ref{thm}, the solutions
are symmetric when $\beta$ changes sign,
i.e.
$u_{-\beta}(-x) = u_\beta(x)$. Thus, we will only present the results
for $\beta \geq 0$.

\iffalse
\befig[h]
\includegraphics*[width = \linewidth]{change_beta.eps}
%\end{center}
\caption{The MET $u(x)$ with pure jump L\'evy motion for different $\beta$ and $\a = 0.5, 1, 1.5, 1.99$. }
\label{changebeta1}
\eefig
\fi

\subsubsection{Effects of the skewness parameter $\beta$ and the index of stability $\a$}

%Both of the $\a$ and $\beta$ are the most important index for asymmetric $\a$-stable
%L\'evy motion, so let's consider the effects of these two parameters for the mean exit time firstly.

\befig
\includegraphics*[ width=\textwidth]{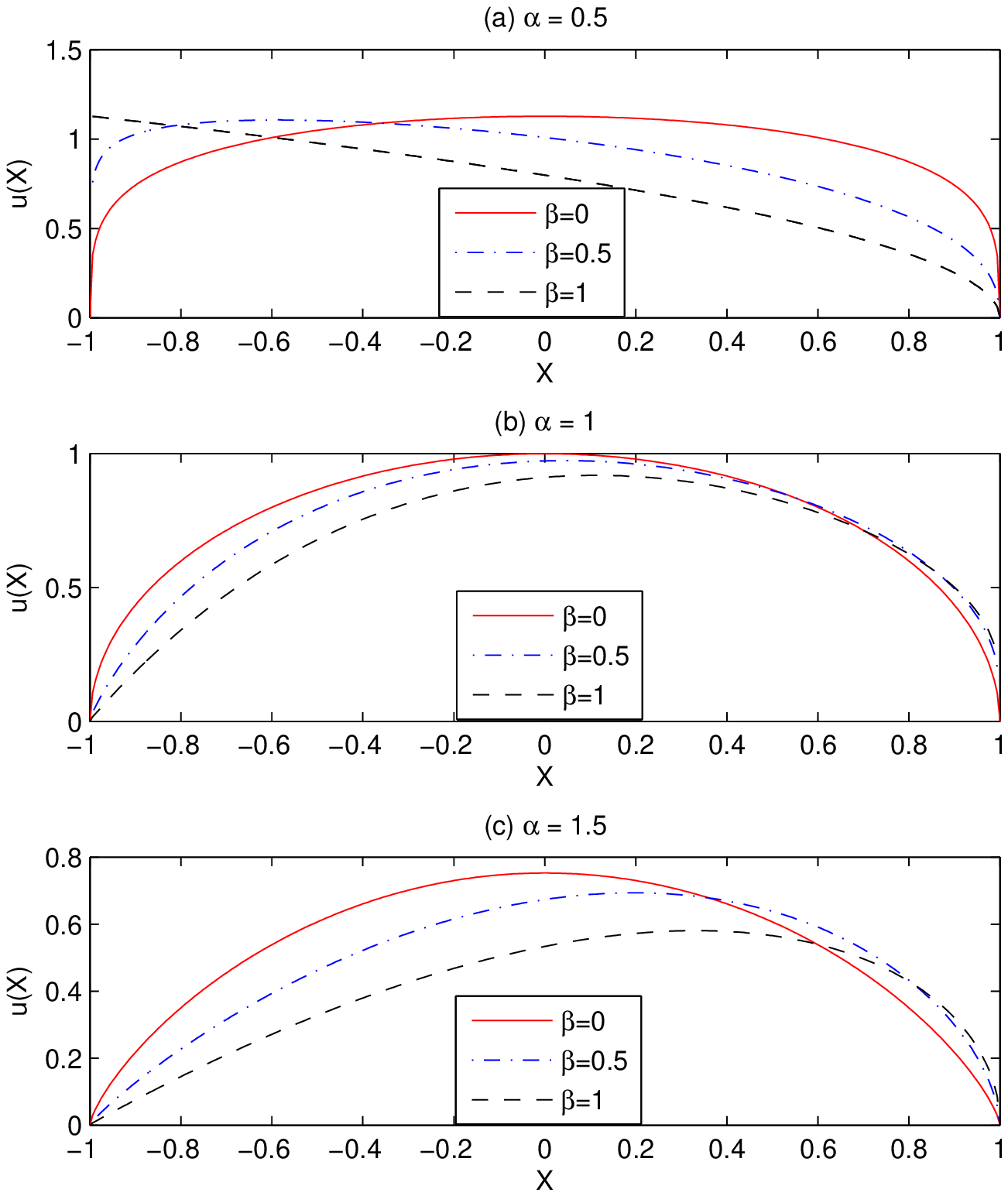}
%\end{center}
\caption{The effect of the skewness parameter $\beta$
on the MET $u(x)$ with pure jump L\'evy measure($d = 0$,
$\eps = 1$, and $f \equiv 0$),  domain $D = (-1, 1)$,
for different $\a$ ($\a = 0.5$(part(a)), $\a = 1$(part(b)), $\a = 1.5$(part(c)))  and $\beta$ ($\beta = 0$ (solid line),
$\beta = 0.5$ (dashdot line), $\beta = 1$ (dashed line)).
}
\label{change_betaf0d0}
\eefig
Figure~\ref{change_betaf0d0} shows that the MET solutions
for different values of $\beta=0, 0.5, 1$ for each of $\a=0.5, 1$ and $1.5$.
In all cases shown, the domain is $D=(-1,1)$ and there is no drift $f\equiv 0$
and no Gaussian part $d=0$.
When $\beta = 0$,
the MET $u$ is known, given by $\displaystyle{u(x) = \frac{\sqrt{\pi}}{2^\a \Gamma(1+\a/2) \Gamma((1+\a)/2)} (1-x^2)^{\a/2}}$,
symmetric about $x=0$ just like the PDF
$S_\a(1,0,0)$ displayed in Fig.~\ref{skewpdf}.
When $\beta \neq 0$, the MET is not symmetric about $x=0$
even when the domain is symmetric. Furthermore, the larger $\beta$ is, the more asymmetric MET is.
There are significant differences on the MET due to the effect of $\beta$
for different values of $\alpha$. For $\a=0.5$ and $\beta=0.5$ or $1$,
we find that the METs $u(x)$
are discontinuous at the left boundary $x=-1$, implying that the MET is nonzero
once the starting point is insider the domain. The MET is smaller for larger value of $\beta$ if the starting point $x$ is positive, while the MET is much larger for bigger $\beta$ when the starting point is close to the left boundary.
In contrast, the behavior changes for $\alpha =1$ or $1.5$, as shown in Fig.~\ref{change_betaf0d0}(b) and (c): the MET is mostly smaller for larger value
of $\beta$ for most of the starting points except when the starting point is
close to the right boundary. These behaviors can be explained by examining
the corresponding PDFs shown in Fig.~\ref{skewpdf}.
For example, the PDFs $S_{0.5}(1,\beta,0)$ in Fig.~\ref{skewpdf}(a)
show that it has almost zero
probability moving to the left for $\beta=1$, while the PDFs of
$S_{1.5}(1,\beta,0)$ in Fig.~\ref{skewpdf}(c)
indicate that the stochastic process has
larger probability moving to its immeadiate left when $\beta$ is larger.

\befig[h]
\includegraphics*[width = \linewidth]{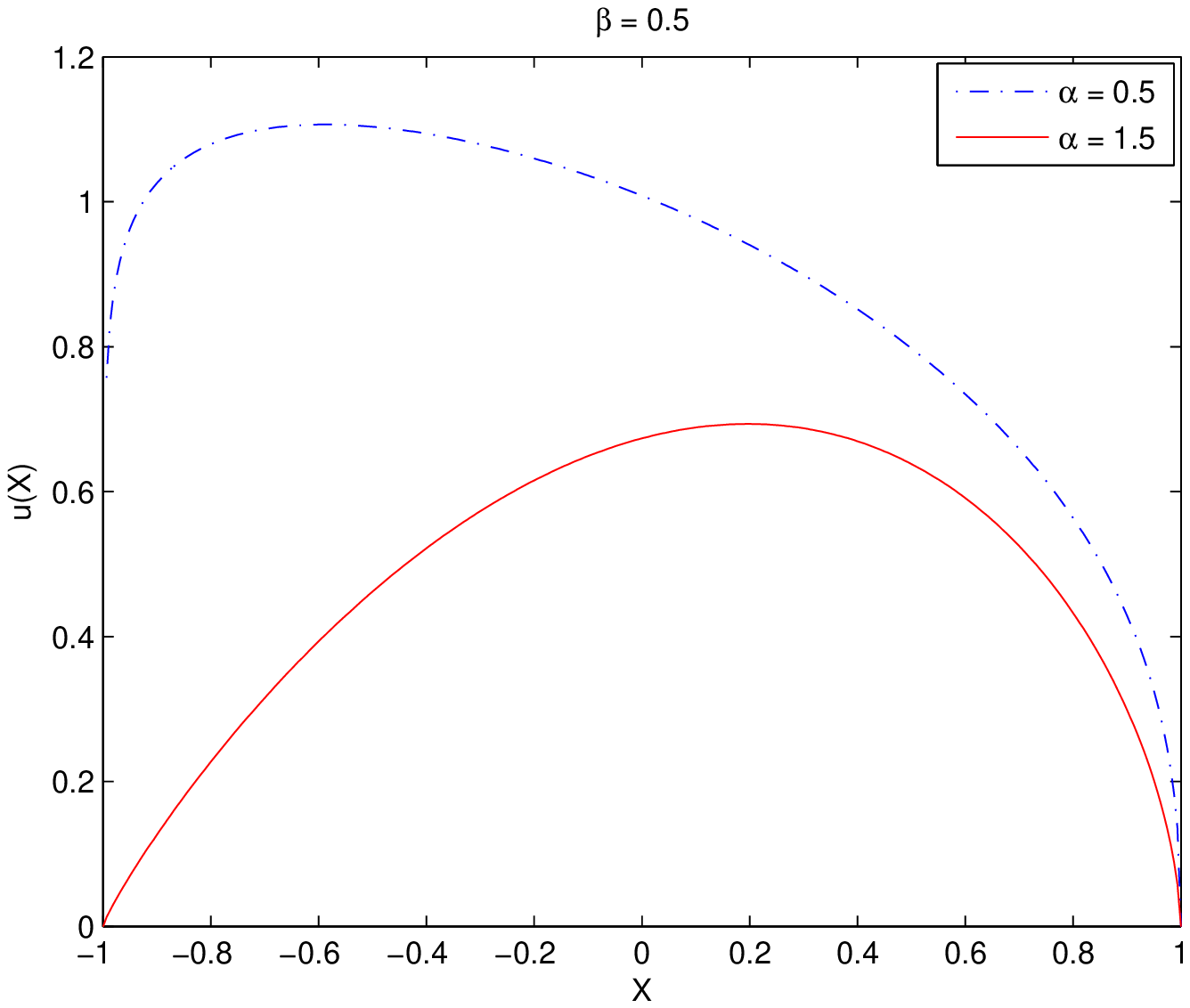}
%\end{center}
\caption{The MET $u(x)$ with $d = 0$, $f \equiv 0$, $\eps = 1$ and domain $D = (-1, 1)$
for $\beta = 0.5$ and different values of $\a = 0.5, 1.5$. }
\label{diffalpBeta05}
\eefig
To show the effect of index of stability $\a$ directly,
Fig.~\ref{diffalpBeta05}
plots the METs for the fixed skew parameter $\beta = 0.5$
but two different values of $\alpha=0.5$ and $1.5$ in one graph.
For all starting point $x$, the MET is smaller, skewed toward to the right
and a continuous function of $x$ when $\a=1.5$, while
that of $\a=0.5$ is skewed toward the left and discontinuous
at the left boundary. Compared with the PDFs of $S_\a(1,0.5,0)$
in Fig.\ref{skewpdf}(b), the process for $\a=0.5$ has much smaller
chance to move to the left, thus it takes longer time to
exit the domain for the initial starting position
in the left part of the domain.

\subsubsection{Effect of domain size}

\befig
\includegraphics*[width = \linewidth]{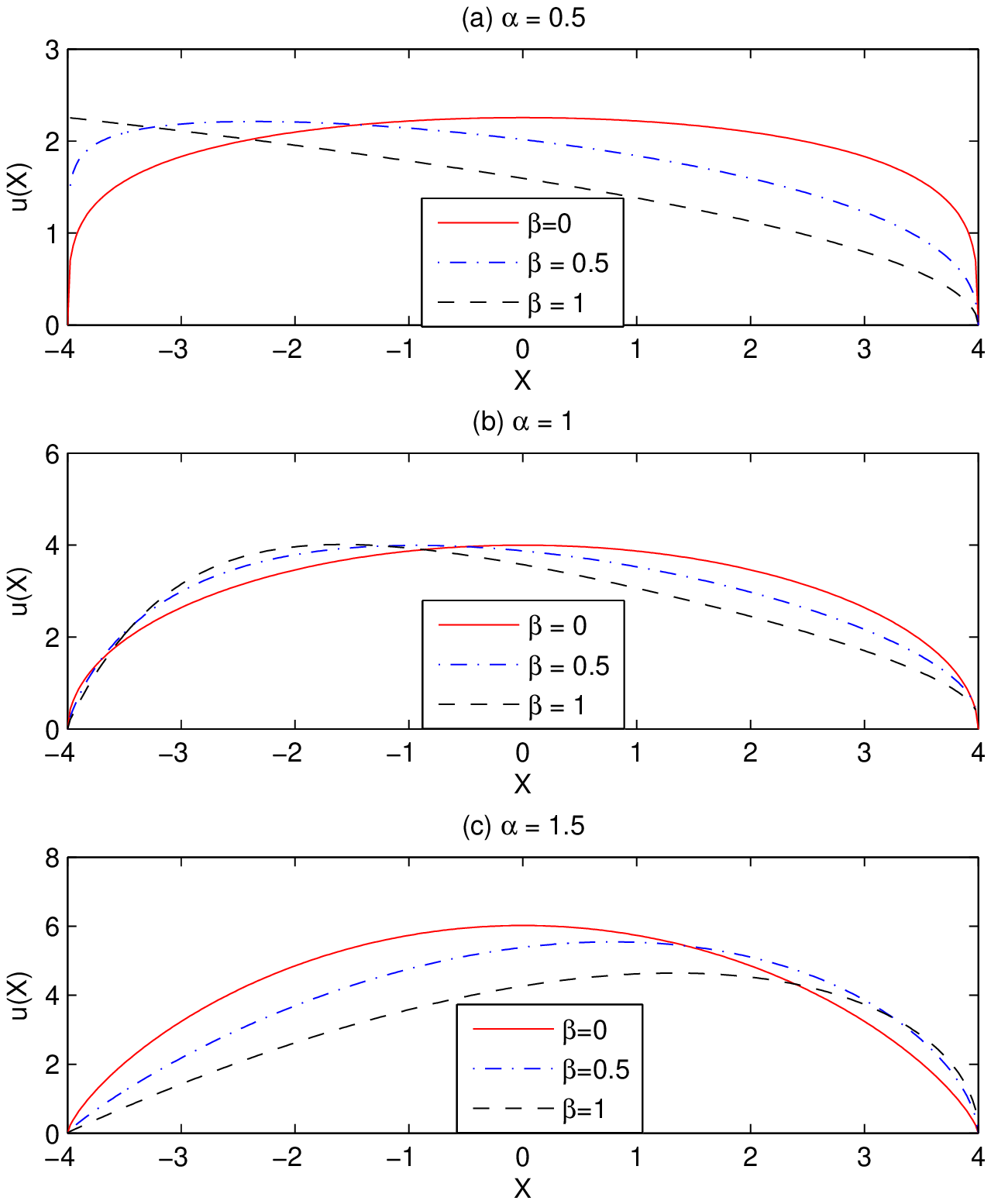}
%\end{center}
\caption{The MET $u(x)$ for the larger domain $D = (-4, 4)$ with
pure jump L\'evy measure($d = 0$, $\eps = 1$ and $f\equiv 0$)
for different values of $\beta$. (a) $\a=0.5$; (b) $\a=1$; (c) $\a=1.5$.}
\label{change_betaf0d0b4}
\eefig
Next, we increase the domain size to $D = (-4, 4)$
and keep the other factors the same as in Fig.~\ref{change_betaf0d0},
i.e. $d = 0, \eps = 1$ and $f \equiv 0$ for $\a = 0.5, 1, 1.5$.
Comparing the results corresponding to the different domain sizes
(the smaller size in Fig.~\ref{change_betaf0d0} and
the larger size in Fig.~\ref{change_betaf0d0b4}),
we find that behaviors of the MET for different values of $\beta$ are
similar for $\a=0.5$ and $\a=1.5$. However, the profiles of the METs
for $\a=1$ are dramatically different when the domain $D$ changes
from $(-1,1)$ to $(-4,4)$.  For the larger domain, the process starting
from the most of the left-half of the domain takes longer time
to exit the domain when $\beta$ increases, while the opposite is true
for the smaller domain. For the same value of $\beta$, the shapes of the MET
skewed toward the left for $D=(-4,4)$ instead of toward to the right
for $D=(-1,1)$.

\iffalse

\befig[h]
\includegraphics*[width = \linewidth]{Mchange_alphaf0d01.eps}
%\end{center}
\caption{The effect of different bounded domain $D = (-1,1)$ (part(a))
, $D = (-4,4)$ (part(b)) on the MET $u(x)$ with
 pure jump L\'evy noise $d = 0$, $f(x) = 0$, $\eps=1$, $\beta = 0.5$
 and $\a = 0.5$ (blue dashdot line), $\a = 1$ (red dashed line),
 $\a = 1.5$ (black solid line). }
\label{change_alphaf0d0}
\eefig
Figure~\ref{change_alphaf0d0} tell us the effect of different domain,
$D = (-1, 1)$(part (a)) and $D = (-4, 4)$(part(b)) with pure jump L\'evy motion and
asymmetric parameter $\beta = 0.5$.
Here, we fix the other parameters $f(x) = 0$, $\eps=1$ and take different
$\a$($\a = 0.5, 1, 1.5$) for comparison. For large domain($D = (-4, 4)$),
the stable parameter $\a$ is larger,
the MET is shorter; The behavior of MET is on the contrary
for small domain($D = (-1, 1)$). The figure of MET is discontinuous for
both of domain as $x$ is near the right endpoint if $\a < 1$ ($\a = 0.5$),
but this phenomenon disappear if $\a \geq 1$. For $\beta <0$,
MET is discontinuous at the left endpoint for $\a = 0.5$. This is also agree with
the property that the jump sizes are large and
low-frequency for $0<\a<1$ while it is opposite for $1 \leq \a<2$.
\fi

\subsubsection{Effect of noises }
\befig[h]
\includegraphics*[width = \linewidth]{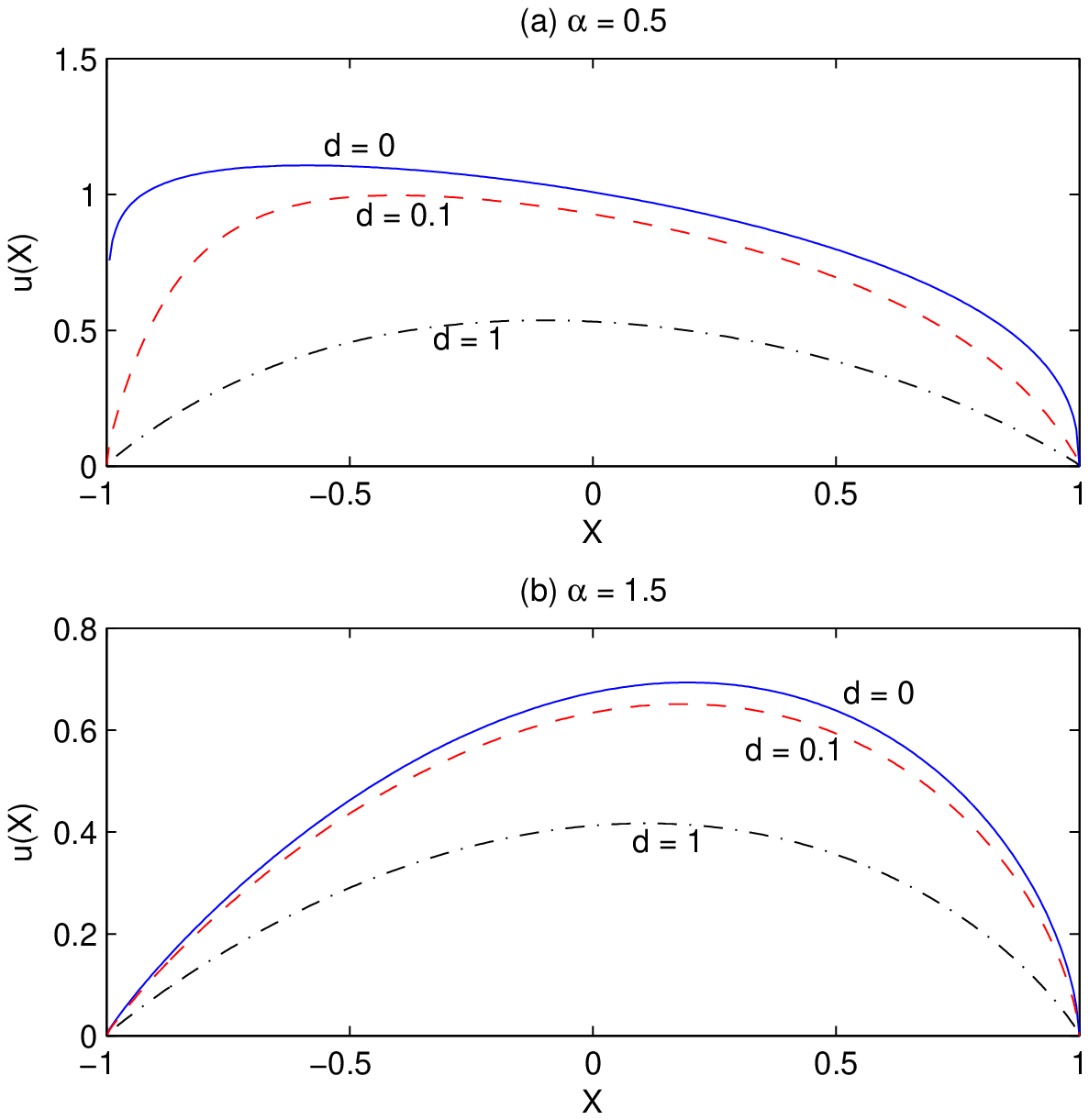}
%\end{center}
\caption{The effect of Gaussian noise. The METs are plotted
for $d = 0$(blue solid line), $d = 0.1$(red dashed line), $d = 1$(green dashdot line) with the domain $D = (-1,1)$, the skewness parameter $\beta = 0.5$, the drift term $f \equiv 0$ and $\eps = 1$ for $\a = 0.5$(part(a)) and $\a = 1.5$(part(b)). }
\label{change_d_f0r1}
\eefig
Now, let's consider the effect of the intensities of
the Gaussian noise, $d$, and the non-Gaussian noise, $\eps$.
Figure~\ref{change_d_f0r1} shows the METs for different values
of $d = 0, 0.1, 1$ with the domain $D = (-1, 1)$,
 $\eps = 1, f \equiv 0$ and  $\beta = 0.5$.
The role of the Gaussian noises play on MET is obvious:
when the noise is stronger, the MET is shorter for any $\a$,
similar to the results for the symmetric L\'evy cases shown in \cite{Ting12}.
If we add any amount of Gaussian noise (even for the low intensity $d = 0.1$),
the MET becomes continuous at the left end point when $\a = 0.5$.
As the intensity of Gaussian increases, the MET is more symmetrical about
the center of the domain $x=0$.

\befig[h]
\includegraphics*[width = \linewidth]{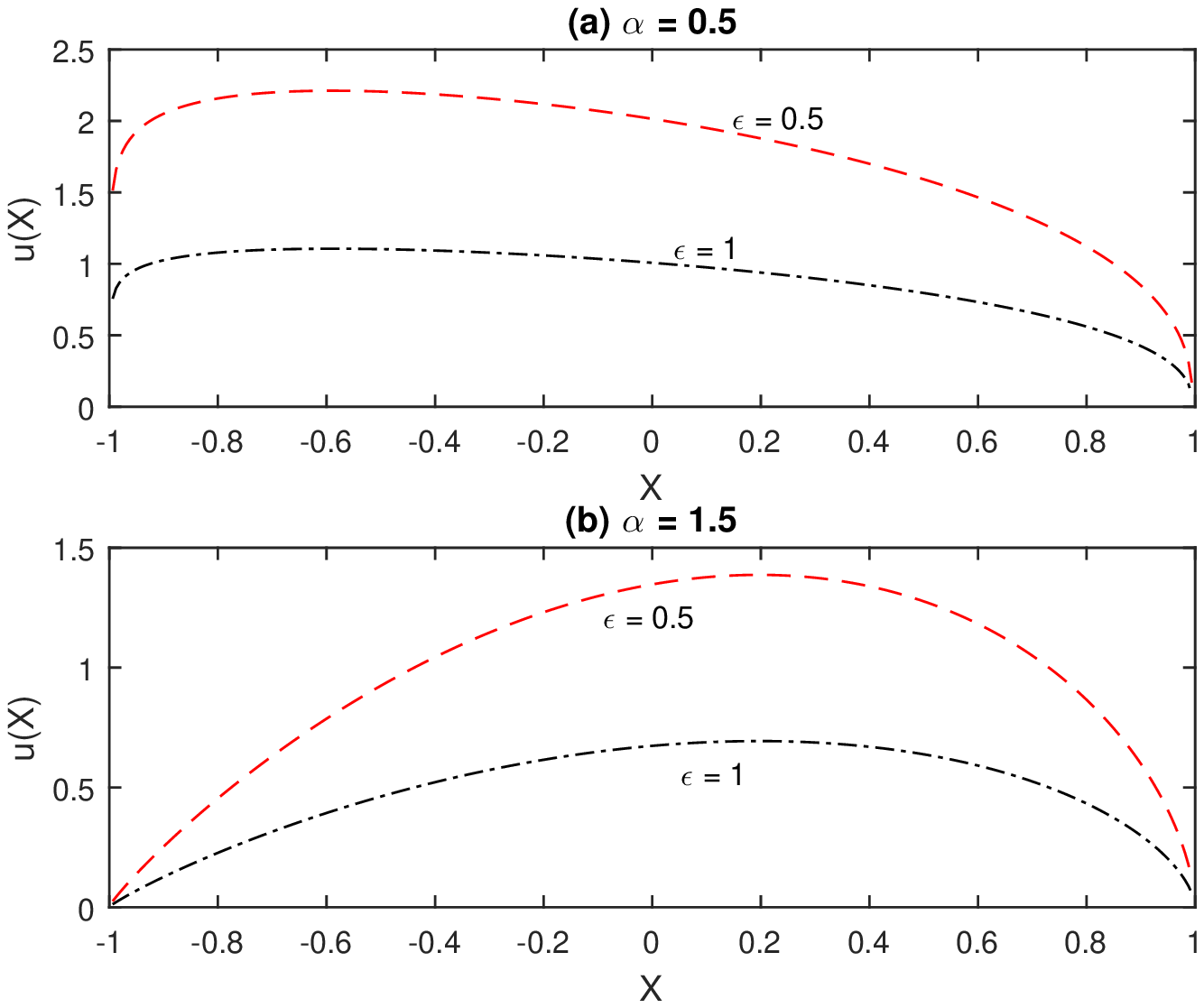}
%\end{center}
\caption{The effect of the non-Gaussian noise
on the MET with pure asymmetric L\'evy motion($f\equiv 0, d = 0$).
(a) The METs with the domain $D = (-1,1)$, $\a = 0.5$ and the skewness parameter $\beta = 0.5$
 for different values of $\eps = 0.5$ (the dashed line) and $\eps = 1$ (the dash-dotted line); (b) The same as (a) except $\a = 1.5 $. }
\label{change_eps}
\eefig
Figure~\ref{change_eps} shows the effect of the intensity
of the non-Gaussian noise. From the METs for $\eps = 0.5, 1$
with domain $D = (-1,1)$, $\beta = 0.5$,
$f \equiv 0$ and $\a = 0.5$ and $\a = 1.5$, the MET gets smaller
when $\eps$ increases and the shape profile of the MET does not change
much as we $\eps$ changes.

\subsubsection{Effect of drift term $f$}

Last, we examine the effect of the O-U potential, i.e., having  the drift term
$f(x)=-x$ on the MET. Figure~\ref{ou_d0beta05} shows the METs with the drift
$f(x)=-x$ and without the drift $f\equiv 0$ for the case of
pure non-Gaussian noise ($d=0$ and $\eps=1$). In the presence of the O-U potential, the MET increases as expected.
For $\a=0.5$, the MET $u(x)$ becomes discontinuous at both the boundary points of the domain, $x=1,-1$  when the O-U potential is added to the system. In contrast, the MET stays continuous for $\a=1.5$ when the drift term is present.
Again, our numerical results demonstrate that
the regularity of the solution appears to be dependent on whether
$\a$ is greater than 1 or less than 1. It would be interesting research topic
to investigate it theorectically.

\befig[h]
\includegraphics*[width = \linewidth]{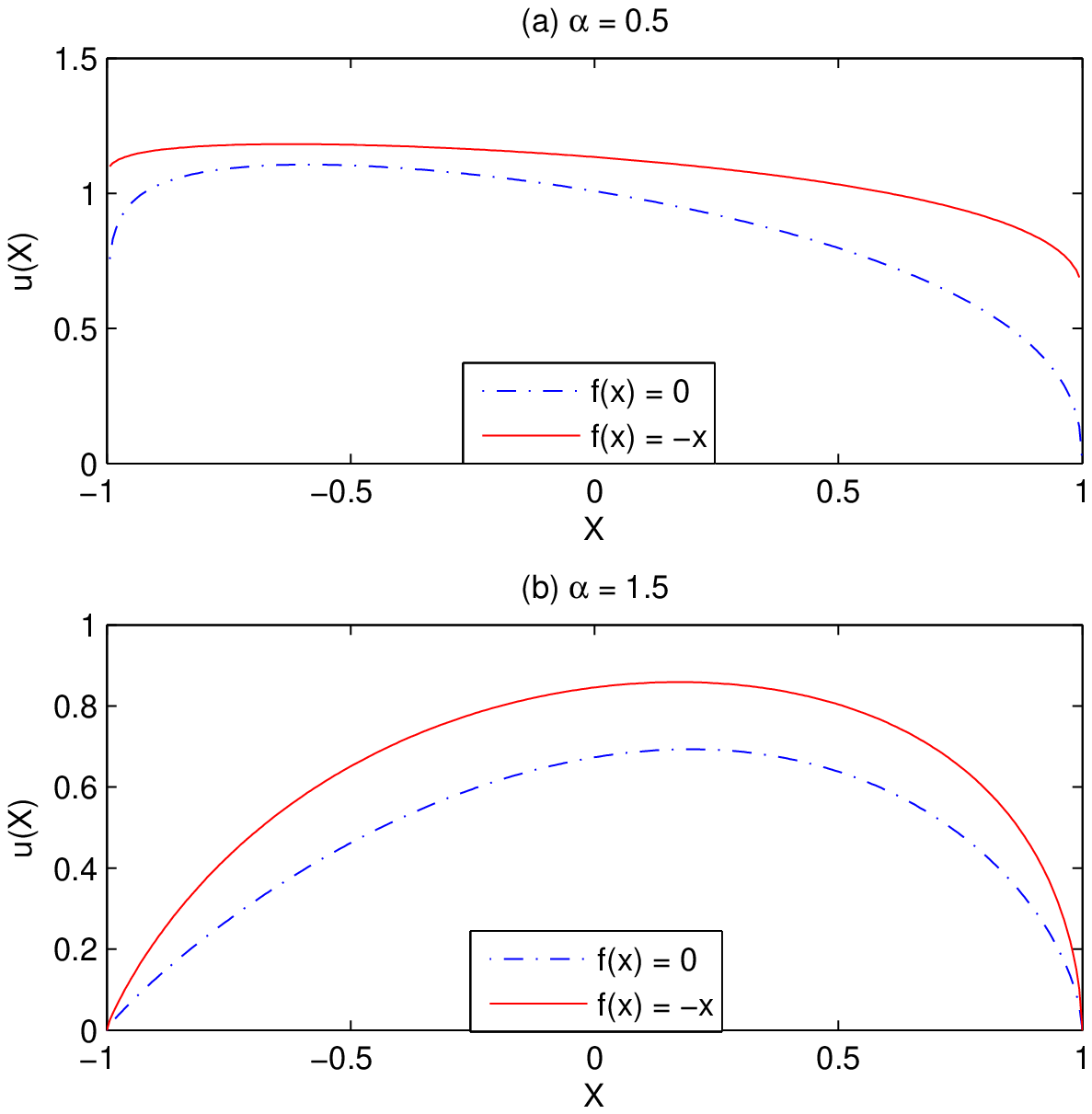}
%\end{center}
\caption{The effect of the drift term on the MET with the domain $D = (-1,1)$
$d = 0$, $\eps = 1$ and the skewness parameter $\beta = 0.5$.
(a) The METs for $\a = 0.5$ with the drift $f(x)=-x$ (the solid line) and without the drift $f\equiv 0$ (the dash-dotted line).
(b) The same as (a) except $\a = 1.5$. }
\label{ou_d0beta05}
\eefig

\subsection{Escape probability}

\befig
\includegraphics*[width = \linewidth]{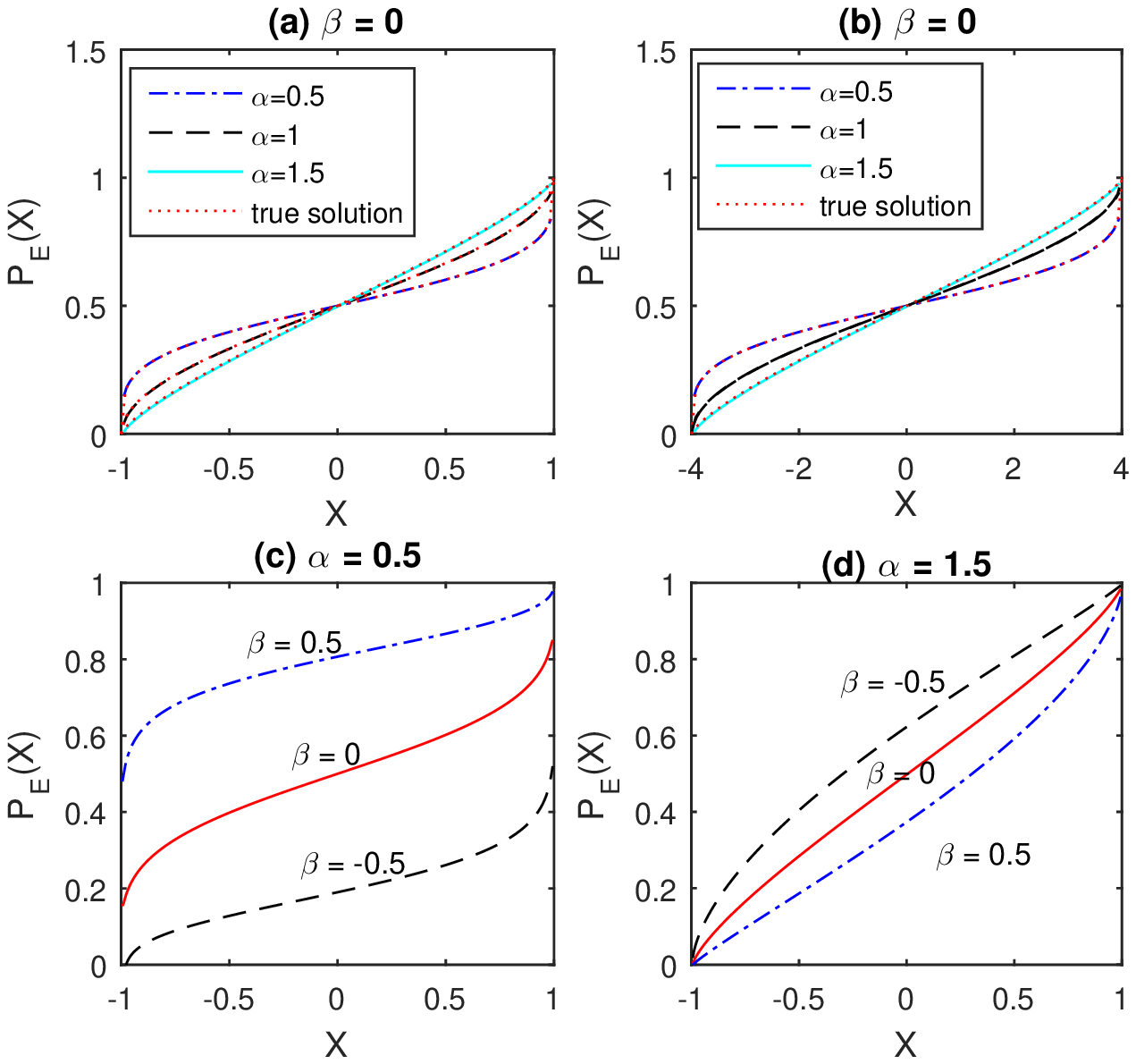}
%\end{center}
\caption{The escape probability from $D$ first landing in $E$
for $d =0$, $f \equiv  0$ and $\eps = 1$.
(a) Symmetric L\'evy motion($\beta = 0$) and domain $D = (-1,1)$,
    $E = (1,\infty)$ for different $\a$.
(b) The same as (a) except $D = (-4,4)$ and $E = (4,\infty)$.
(c) $\a = 0.5$ and domain $D = (-1,1)$, $E = (1, \infty)$
for the different values of $\beta=-0.5, 0, 0.5$.
(d) The same as (c) except  $\a = 1.5$. }
\label{EpDifBeta}
\eefig
First, we validate our equations \eqref{LastEq11} and \eqref{LastEq12}
and the numerical implementation by comparing
 with analytical result for the special case
of $f \equiv 0$, $d = 0$, $\eps =1$ and $\beta=0$ \cite{Ting12}
\bear
    P_E(x) = \frac{(2b)^{1-\a}\Gamma(\a)}{[\Gamma(\frac{\a}{2})]^2}
    \int_{-b}^x(b^2-y^2)^{\frac{\a}2-1} \dy.
\enar
Figure~\ref{EpDifBeta}(a) and (b) show our numerical results match the
analytical solution well for different domains, which verifies that
our equations and the computer programs are correct.

Figure~\ref{EpDifBeta}(c) and (d)
tell us the effect of the skewness parameter $\beta$ on the escape probability
of a process starting from the domain $D=(-1,1)$, exiting the domain and
landing first to the right $E=(1,\infty)$. Here, we consider
the pure jump L\'evy process: $d=0, f\equiv 0$ and $\eps=1$.
The results show that, when $\a=0.5$, the escape probability
is a sensitive function of $\beta$ and increases dramatically as
$\beta$ changes from $-0.5$ to $0$ then to $0.5$.
In the contrast, when $\a=1.5$,  the process has less chance to escape the
domain $D$ and first land in $E$. The results are consistent with
the plots of the probability density functions $S_{0.5}(1,\beta,0)$
and $S_{1.5}(1,\beta,0)$ shown in Fig.~\ref{skewpdf}(a,c) and those of
METs in Fig.~\ref{change_betaf0d0}. Here we note that, when $\a=0.5$,
the escape probability seems to be discontinuous at the left boundary
point of the domain for $\beta=0.5$ and at the right boundary point
for $\beta=-0.5$.
However, the probability is a continuous function of the initial position $x$ when $\a=1.5$, agreeing with the results of MET presented earlier in the section.

\befig[h]
\includegraphics*[width = \linewidth]{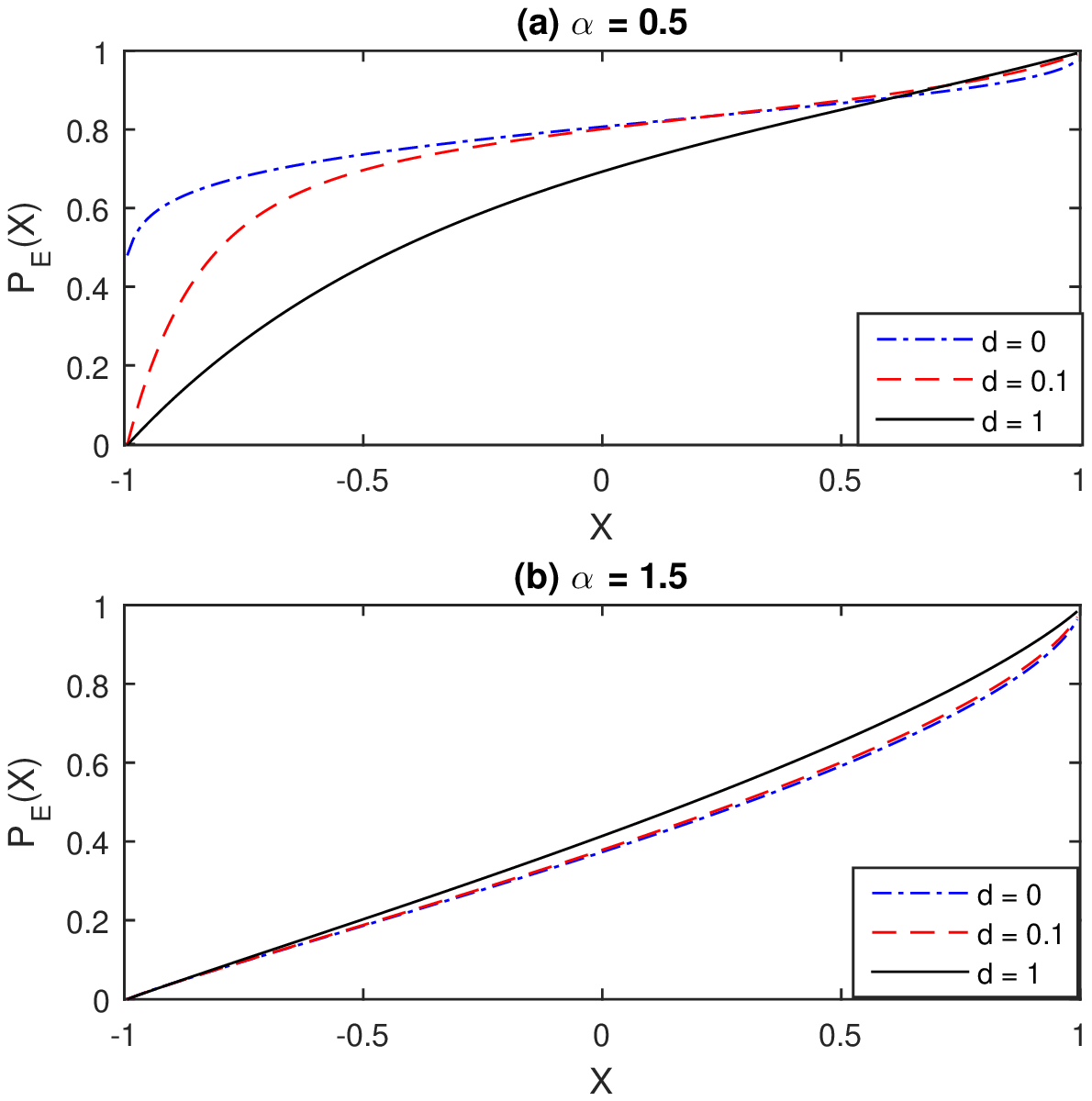}
%\end{center}
\caption{The effect of the Gaussian noise ($d = 0, 0.1, 1$) on the probability of escaping from $D = (-1,1)$ first landing in $E = (1,\infty)$.
(a) $\a = 0.5$, $\beta = 0.5$, $\eps = 1, f \equiv 0$.
(b) Same as (a) except $\a = 1.5$.}
\label{EpDiff_drift}
\eefig

\befig[h]
\includegraphics*[width = \linewidth]{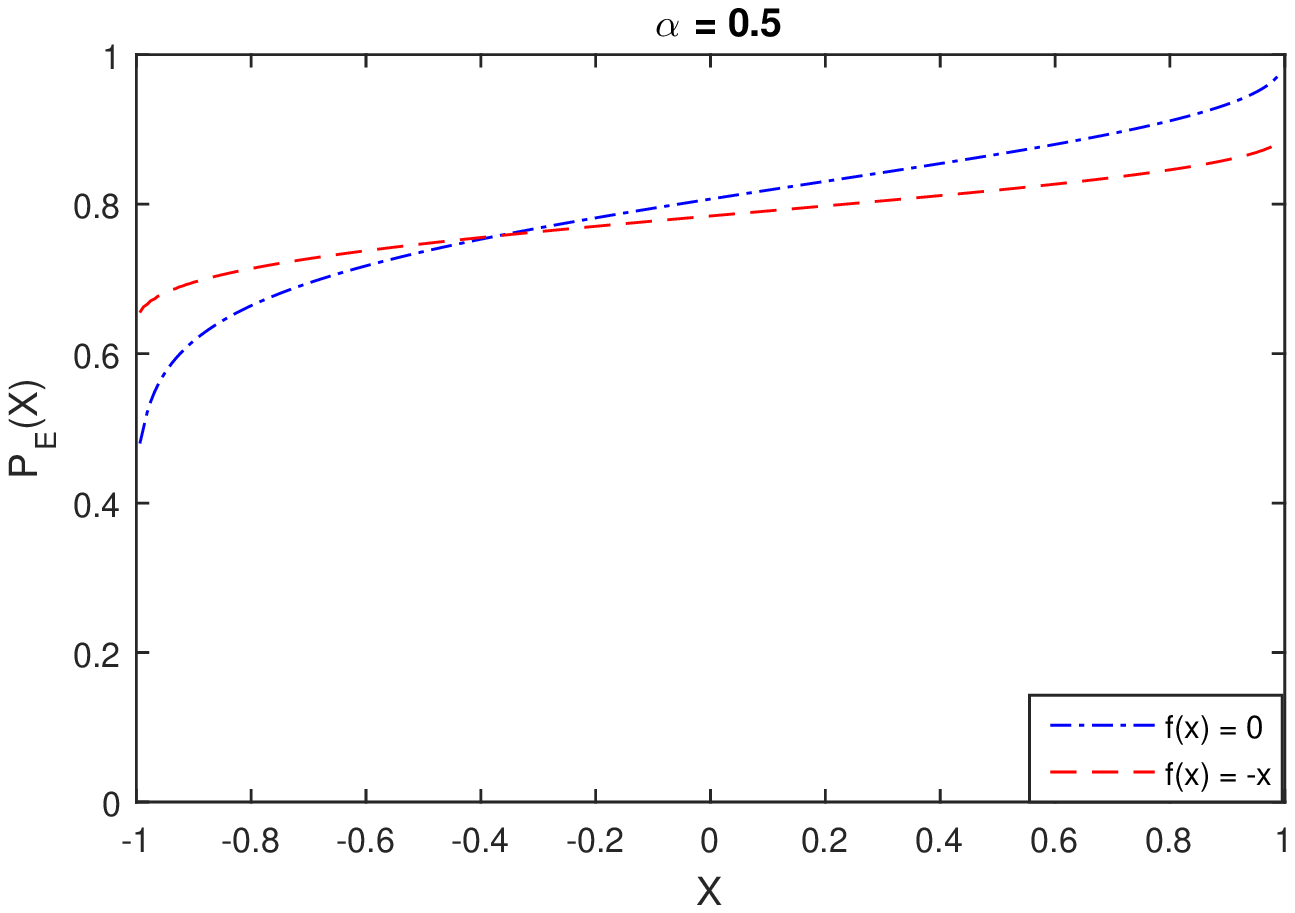}
%\end{center}
\caption{The effect of the drift term $f$ on the escape probability
from $D = (-1,1)$ first landing in $E = (1,\infty)$
for $d = 0$,  $\eps=1$, $\beta = 0.5$, $\a=0.5$ with
for two different $f\equiv 0$ and $f(x)=-x$.
}
\label{EpChange_drif}
\eefig

Next, we consider the effect of the intensity of the Gaussian noise and the drift term on the escape probability
in the case of asymmetric L\'evy noise with $\beta = 0.5$.
For $\a=0.5$, Fig.~\ref{EpDiff_drift}(a) shows
that, with any amount of the Gaussian noise,  the escape probability becomes
a smoother function eliminating any discontinuities at the boundary
and it has larger effect for negative starting position $x$ than that
for positive $x$. The impact of the Gaussian noise is relatively
small for $\a=1.5$ as shown in Fig.~\ref{EpDiff_drift}(b).
Figure~\ref{EpChange_drif} shows that
the difference in the drift term $f$ has a great influence
on the escape probability.
Recall that the drift term $f(x)=-x$ drives the process toward the globally
stable point $x=0$. Comparing with zero drift $f\equiv 0$,
we find that the probability with the drift term is smaller
for the starting position $x>-0.3$, while the escape probability
with the drift is larger than that without the drift for the starting position
$x<-0.5$.
Similar to the results of MET, the presence of the O-U potential $f(x)=-x$
causes that the escape probability be discontinuous at both end points
of the domain.

%\befig[h]
%\includegraphics*[width = \linewidth]{1Dasy_backerr.eps}
%\end{center}
%\caption{The difference between the analytic and
%numerical solutions in 2-norm for the symmetric
%$\a$-stable process $\beta = 0, f(x) = 0, d = 0$
%with difference $\a = 0.5, 1, 1.5$. }
%\label{1Dasy_backerr}
%\eefig

\section{Conclusions}
  For asymmetric L\'evy motions, there are important applications in many fields, such as
physics, mathematical finance and insurance risks. In addition, it attracts the attention
of mathematicians because it is closely related to the nonlocal partial differential equations.
In this work, an effective and convergent numerical algorithm has been developed for solving the mean
first exit time and escape probability for one-dimensional stochastic systems with asymmetric L\'evy motion.
The convergence of the numerical method is verified numerically and the pointwise convergence order is closed to first-order.
The numerical analysis predicts that the method is of second-order accuracy
for smooth solutions. However, the numerical solutions have divergent derivatives or are discountinous at the boundary and therefore the convergence order suffers.

 We also consider the influence of different parameters of the system on the mean exit time and the escape probability.
For certain deterministic drift and symmetric domains,
we find that the MET has a symmetry with respect to the skew parameter $\beta$.
Thus we focus on the case of $\beta > 0$ and have seen that the profile of the
MET becomes more asymmetric as $\beta$ is larger. From our numerical results,
we find that, for $0<\a<1$, the MET and the escape probability
appear to be discontinous at the boundary of the domain but they are
continuous for $\a>1$.
This interesting behavior of the solution worths further theoretical
investigation.

\section{Acknowledgements}
The research is partially supported by the grants China Scholarship Council (X.W.), NSF-DMS \#1620449 (J.D. and X.L.),
and Simons Foundation \#429343 (R.S.).
%If you'd like to thank anyone, place your comments here
%and remove the percent signs.

\bibliographystyle{plain}
\bibliography{meanexit}

\end{document}